\theoremstyle{plain}
\newtheorem{theorem}{Theorem}[section]
\newtheorem{proposition}[theorem]{Proposition}
\newtheorem{corollary}[theorem]{Corollary}
\newtheorem{lemma}[theorem]{Lemma}
\newtheorem{remark}[theorem]{Remark}
\newcommand{\bfC}{{\mathbb C}}
\newcommand{\bfP}{{\mathbb P}}
\newcommand{\bfR}{{\mathbb R}}
\newcommand{\mapright}[1]{\smash{\mathop{   \hbox to 0.7cm{\rightarrowfill}}
  \limits^{#1}}}
\begin{document}

\title{Multiplier ideal sheaves and integral invariants 
on toric Fano manifolds}
\author{Akito Futaki}
\address{Department of Mathematics, Tokyo Institute of Technology, 2-12-1,
O-okayama, Meguro, Tokyo 152-8551, Japan}
\email{futaki@math.titech.ac.jp}
\author{Yuji Sano}
\address{Institut des Hautes \'Etudes Scientifiques \\
Le Bois-Marie, 35, route de Chatres\\
F-91440 Bures-sur-Yvette, France}
\email{sano@ihes.fr}

\date{March 8, 2008 }

\begin{abstract} 
We extend Nadel's results on some conditions for the multiplier ideal sheaves to satisfy 
which are described in terms of an obstruction defined by the first author. Applying our
extension we can determine
the multiplier ideal subvarieties on toric del Pezzo surfaces which do not admit
K\"ahler-Einstein metrics. 
We also show
that one can define multiplier ideal sheaves for K\"ahler-Ricci solitons and extend the result of Nadel
using the holomorphic invariant defined by Tian and Zhu.
\end{abstract}

\keywords{multiplier ideal sheaf, K\"ahler-Einstein metric, K\"ahler Ricci soliton, toric Fano manifold}

\subjclass{Primary 53C55, Secondary 53C21, 55N91 }

\maketitle

\section{Introduction}
The existence problem of K\"ahler-Einstein metrics on compact complex manifolds of positive first Chern class, i.e. Fano manifolds, has not
been completely settled. The existence is conjectured to be equivalent to {\it K-stability}, see  \cite{donaldson02}.
There are existence results, for example by Nadel \cite{nadel1}, while there are known obstructions, for
example by the first author \cite{futaki83.1}. 

We recall Nadel's results on multiplier ideal sheaves (subschemes) in \cite{nadel1} and \cite{nadel2}.
In general the multiplier ideal sheaves are constructed as follows. Let $S=\{u_i\}$ be a sequence of K\"ahler potentials in $c_1(X)$ such that for any $\alpha \in (\frac{n}{n+1}, 1)$,
\begin{equation}
	\label{eq:S1}
		\sup u_i=0,
		\,\,\,
		\lim_{i\to \infty}\int_X \exp(-\alpha u_i)\omega^n_0=\infty,
\end{equation}
and for some non-empty open subset $U\subset X$
\begin{equation}\label{eq:bdd_condtion}
	\int_U \exp(-u_i)\omega^n_0 \le O(1).
\end{equation}
(Note that there always exists $U$ satisfying (\ref{eq:bdd_condtion}) due to the property of plurisubharmonic functions.) For $S$ as above, Nadel constructed a coherent ideal sheaf $\mathcal{I}_S\subset \mathcal{O}_X$, which is called the {\sl multiplier ideal sheaf}, satisfying
\begin{itemize}
	\item
		 $\mathcal{I}_S\neq \mathcal{O}_X$,
		 $\mathcal{I}_S\neq 0$
	\item
		For a non-negative Hermitian line bundle $L$,
		\begin{equation}\label{eq:vanishing_thm}
			H^q(X, L\otimes \mathcal{I}_S)=0
			\,\,\,\,
			(q>0).
		\end{equation}
\end{itemize}
Let us next recall the continuity method to find K\"ahler-Einstein metrics on Fano manifolds.
Let $(X, g)$ be an $n$-dimensional Fano manifold with a K\"ahler metric $g$ whose K\"ahler form $\omega_g$ represents $c_1(X)$. 
A K\"ahler metric $g$  is called a K\"ahler-Einstein (KE for short) metric if and only if 
\begin{equation}\label{eq:def_ke}
	\mbox{Ric}(\omega_{g})=\omega_{g}.
\end{equation}
Fix an initial K\"ahler metric $g$. Finding a solution of the equation (\ref{eq:def_ke}) is equivalent to finding $\varphi\in C^{\infty}(X)$ satisfying  the following Monge-Amp\`ere equation;
\begin{equation}\label{eq:ke_MA}
	\det(g_{i\bar{j}}+\varphi_{i\bar{j}})
	=
	\det(g_{i\bar{j}})
	\exp(h_g-\varphi)
\end{equation}
and $g_{i\bar{j}}+\varphi_{i\bar{j}}>0$, where
\begin{equation}\label{eq:einstein_gap}
	\mbox{Ric}(\omega_g)-\omega_g=\frac{\sqrt{-1}}{2\pi}\partial\bar{\partial}h_g,
	\,\,\,
	\int_X e^{h_g} \omega_g^n=\int_X \omega_g^n.
\end{equation}
To apply the continuity method to (\ref{eq:ke_MA}), we consider
\begin{equation}\label{eq:ke_CM}
	\det(g_{i\bar{j}}+(\varphi_{t})_{ i\bar{j}})
	=
	\det(g_{i\bar{j}})
	\exp(h_g-t\varphi_{t}),
\end{equation}
where $t\in [0,1]$.
We have a solution for $t=0$ by a theorem of Yau \cite{yau78}. By the implicit function theorem the subset $R$ consisting of all
$t$'s in $[0,1]$ for which (\ref{eq:ke_CM}) have a solution is an open subset. Therefore if $R$ is a closed subset then
$R = [0,1]$ and we have a solution for (\ref{eq:ke_MA}). 
If $R$ is not closed, i.e. if 
there is a sequence $\{t_{i}\}_i$ in $R$ such that $t_i \to t_{\infty}$ as $i \to \infty$ and that 
the equation (\ref{eq:ke_CM}) is not solvable at $t_{\infty}\in (0,1]$ then 
the sequence of solutions $S := \{\varphi_{t_i}\}$ of (\ref{eq:ke_CM}) 
satisfies (\ref{eq:S1}) and (\ref{eq:bdd_condtion}) and induces 
the multiplier ideal sheaf $\mathcal{I}_S\subset \mathcal{O}_X$.

We call the subscheme cut out by the multiplier ideal sheaf a {\it multiplier ideal subscheme}
and its support  $V$ the {\it multiplier ideal subvariety} (MIS for
short).
In this note, we call the MIS induced due to the bubble of the equation of (\ref{eq:ke_CM}) the {\it 
K\"ahler-Einstein multiplier ideal subvariety} or KE-MIS for short.
In \cite{nadel2}, Nadel compared the KE-MIS with the zero sets of global holomorphic vector fields for
which the obstruction defined by the first author (\cite{futaki83.1}) vanishes.
This obstruction is the map $F: \mathfrak h(X) \to \mathbb{C}$ defined as
\begin{equation}\label{F}
	F(v):=\int_X dh_g(v)\,\omega_g^n,
\end{equation}
where $\mathfrak h(X)$ is the Lie algebra of all 
global holomorphic vector fields on $X$. $F$ is independent of the
choice the K\"ahler form $\omega_g$ 
in $c_1(X)$, is a Lie algebra character and vanishes if $X$ admits a K\"ahler-Einstein
metric. 
\begin{theorem}[Nadel \cite{nadel2}]\label{thm:nadel}
Let $X$ be a Fano manifold and $g$ an initial K\"ahler metric on $X$ whose K\"ahler form represents $c_{1}(X)$. Suppose that the closedness does not hold for the continuity method, so that we get a multiplier ideal subvariety $V\subset X$. Then for any global holomorphic vector field $v$ on $X$ with $F(v) = 0$, we have $V\not\subset Z^{+}(v)$.
\end{theorem}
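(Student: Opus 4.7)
The plan is to compute $F(v)$ along the continuity-method sequence $\{\varphi_{t_i}\}$ whose failure to converge produces $V$, and to derive a contradiction under the hypothesis $V\subset Z^+(v)$.

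First, applying $-\sqrt{-1}\partial\bar\partial\log$ to equation (\ref{eq:ke_CM}) and comparing with (\ref{eq:einstein_gap}) shows that the Ricci potential of $\omega_{t_i}:=\omega_g+\sqrt{-1}\partial\bar\partial\varphi_{t_i}$ is
\begin{equation*}
h_{\omega_{t_i}} = -(1-t_i)\varphi_{t_i} + c_i
\end{equation*}
for a bounded normalising constant $c_i$. Since the character $F$ in (\ref{F}) is independent of the choice of K\"ahler form in $c_1(X)$, I would evaluate
\begin{equation*}
F(v) = \int_X v(h_{\omega_{t_i}})\,\omega_{t_i}^n = -(1-t_i)\int_X v(\varphi_{t_i})\,\omega_{t_i}^n.
\end{equation*}
Letting $\theta_v$ be the holomorphy potential of $v$ with respect to $\omega_g$ and $\theta_v^{t_i}$ the corresponding potential for $\omega_{t_i}$, one has $\theta_v^{t_i} = \theta_v + v(\varphi_{t_i})$ modulo an additive constant. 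With the natural normalisations, the hypothesis $F(v)=0$ combined with $t_i<1$ then fixes the $\omega_{t_i}^n$-mean of $\theta_v$ as a constant independent of $i$.

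Next, I would contradict this under $V\subset Z^+(v)$. The set $Z^+(v)$ is by definition the attractive zero set of $v$, on which $\theta_v$ (with the normalisation coming from the moment-map interpretation) attains its maximum over $X$, with strict inequality on compact subsets of $X\setminus Z^+(v)$. Under the assumption $V\subset Z^+(v)$, the concentration of $e^{-\alpha u_i}\omega_g^n$ on $V$ dictated by (\ref{eq:S1})--(\ref{eq:bdd_condtion}) implies, after a comparison between the exponents $\alpha$ and $t_i$ (with $t_i\to t_\infty\in(n/(n+1),1]$), that the probability measure $\omega_{t_i}^n/\int\omega_{t_i}^n$ also charges every neighbourhood of $V$ with a mass bounded below as $i\to\infty$. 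Because $\theta_v$ is strictly less than $\max_X\theta_v$ by a definite amount away from $V$, this concentration forces
\begin{equation*}
\frac{1}{\int_X\omega_{t_i}^n}\int_X\theta_v\,\omega_{t_i}^n \;\longrightarrow\; \max_X\theta_v,
\end{equation*}
contradicting the $i$-independence of this average established above.

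The main obstacle is the concentration step for $\omega_{t_i}^n$: its total mass $c_1(X)^n$ is constant in $i$, so one cannot directly read off concentration from the divergence of $\int e^{-\alpha u_i}\omega_g^n$ that defines $V$. A localised $L^p$-comparison between the MA density $e^{h_g-t_i\varphi_{t_i}}$ and the divergent density $e^{-\alpha\varphi_{t_i}}$ for some $\alpha\in(n/(n+1),t_\infty)$, supported by the pluripotential estimates underlying Nadel's construction of $\mathcal{I}_S$, is required. A secondary technical point is choosing the additive normalisation of $\theta_v$ so that ``$V\subset Z^+(v)$'' translates into the strict inequality $\theta_v<\max_X\theta_v$ on $X\setminus V$ that the contradiction requires, rather than merely the pointwise vanishing of $v$ along $V$.
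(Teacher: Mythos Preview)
Your overall strategy matches the paper's (which reproduces Nadel's argument inside the proof of Theorem~\ref{thm:main_theorem} and again, in the soliton setting, in the proof of Theorem~\ref{thm:nadel_thm_krs}): derive from the continuity equation an integral identity valid for every $t_i$, invoke the volume-concentration fact that any compact subset of $X\setminus V$ has $\omega_{t_i}^n$-measure tending to zero (this is Proposition~4.1 of \cite{nadel2}, which the paper simply cites and which is exactly your ``main obstacle''), and extract a contradiction from a sign on $V$. Where you use the holomorphy potential $\theta_v$, the paper uses $\mathrm{div}_1(v):=\bigl(\mathcal L_v(e^{h_g}\omega_g^n)\bigr)/(e^{h_g}\omega_g^n)$; one line with the Leibniz rule gives
\[
\int_X \mathrm{div}_1(v)\,\omega_{t_i}^n=\frac{t_i}{t_i-1}\,F(v)=0.
\]

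There is, however, a genuine error in your description of $Z^+(v)$. It is \emph{not} the locus where $\theta_v$ attains its maximum, nor is it ``attractive'': already on $\mathbb{CP}^1$ with $v=z\,\partial_z$ one has $\mathrm{div}(v)(0)=1>0$, so $0\in Z^+(v)$, while $\theta_v\propto |z|^2/(1+|z|^2)$ is \emph{minimised} at $0$ and the flow of $\mathrm{Re}(v)$ is repulsive there. In higher dimensions $Z^+(v)$ typically has several components on which $\theta_v$ takes distinct values, so no additive normalisation can make ``$\theta_v<\max_X\theta_v$ on $X\setminus V$'' true. The only information $V\subset Z^+(v)$ actually gives is that $\mathrm{div}(v)>0$ on $V$; since $v$ vanishes there this equals $\mathrm{div}_1(v)$, hence $\mathrm{div}_1(v)\ge\delta>0$ on $V$ by compactness. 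With $Y:=\{\mathrm{div}_1(v)\le\delta/2\}$ compact in $X\setminus V$, the concentration lemma yields $\int_Y\omega_{t_i}^n\to 0$, the identity above forces $\int_{X\setminus Y}\mathrm{div}_1(v)\,\omega_{t_i}^n\to 0$, and since $\mathrm{div}_1(v)\ge\delta/2$ on $X\setminus Y$ one gets $\int_{X\setminus Y}\omega_{t_i}^n\to 0$, contradicting $\int_X\omega_{t_i}^n=c_1(X)^n$. Your route through $\theta_v$ can be repaired only by replacing ``$\theta_v$ attains its maximum on $Z^+(v)$'' with ``$\theta_v$ lies below the fixed constant $c$ on $Z^+(v)$'' (where $\mathrm{div}_1(v)=c-\theta_v$), which is just the $\mathrm{div}_1$ argument in disguise.
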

\noindent
Here $Z(v) \subset X$ denotes the zero set of $v$ and 
\[
	Z^{+}(v):=
	\{
		p\in Z(v) \mid
		\mbox{Re}(\mbox{div}(v)(p))>0
	\},
\]
where $\mbox{div}(v)=(\mathcal{L}_{v}vol_g)/vol_g$, $vol_g$ is the volume form induced by $g$ and $\mathcal{L}_v$ is the Lie derivative along $v$.
In general $\mbox{div}(v)$ will depend on our choice of volume form. However it is easy to check that at points where $v$ vanishes, $\mbox{div}(v)$ is well-defined and not depend on our choice of the volume form. Therefore, $Z^{+}(v)$ is a well-defined set. 

The purpose of this paper is to extend this result in two ways. Firstly, we give a refinement of Theorem \ref{thm:nadel}
for toric Fano manifolds by extending it to all holomorphic vector fields which are generated by the elements of 
the Lie algebra of the torus. Secondly, we show that if we consider the continuity method to
prove the existence of K\"ahler-Ricci soliton on Fano manifolds we obtain multiplier ideal sheaves
and subschemes, the supports of which
we call the {\it K\"ahler-Ricci soliton multiplier ideal subvarieties} or KRS-MIS for short. In this case 
we can extend Nadel's result using the holomorphic invariant defined by Tian and Zhu \cite{tian-zhu}.

To explain the first extension, let $X$ be a toric Fano manifold of complex dimension $n$. 
Then the $n$-dimensional torus $T \cong (\bfC^{\ast})^n$ 
acts on $X$ effectively. 
Let $T_{\bfR}$ be the real torus in $T$ and $\mathfrak t_{\bfR}$ be its Lie algebra.
Put $N_{\bfR} := J\mathfrak t_{\bfR}$ where $J$ is the complex structure of $T$.
For $\xi \in N_{\bfR}$, we denote by $v_{\xi}$ the holomorphic vector field induced by $\xi$,
i.e. if $\xi^{\sharp}$ is the real vector field induced by $\xi$ then 
$v_{\xi} = \frac12(\xi^{\sharp} - i(J\xi)^{\sharp})$.
If $X$ does
not admit K\"ahler-Einstein metrics 
there is a holomorphic vector field $v_{\xi}$ on $X$ induced by a vector $\xi \in N_{\bfR}$ such that the invariant $F(v_{\xi})$ is not equal to zero because Wang and Zhu (\cite{wang-zhu}) proved that a toric Fano manifold admits K\"ahler-Einstein metrics if and only if the obstruction $F$ vanishes. Note that
$F$ vanishes if and only if $F(v_{\xi}) = 0$ for any $\xi \in N_{\bfR}$, see \cite{mabuchi90} for the detail, but this follows also from the conclusions of \cite{wang-zhu}. Let $\Delta \subset M_{\bfR}$ be the reflexive polytope defining
the toric Fano manifold $X$ where $M_{\bfR}$ is the dual space of $N_{\bfR}$ (these are the standard
notations in toric geometry, see section 2).
For $\xi \in N_{\bfR}$, we define the subset of $\Delta$
\[
	D^{\le 0}(\xi):=\{y\in \Delta \mid \langle y, \xi \rangle \le 0\}.
\]
\begin{theorem}
	\label{thm:main_theorem}
	Let $X$ be a toric Fano manifold which does not admit K\"ahler-Einstein metrics. Let $G$ be a maximal compact subgroup of the automorphism group of $X$, $G^{\bfC}$ be its complexification
(so that $T$ is a maximal torus of $G^{\bfC}$), and $V$ be the $G^{\mathbb{C}}$-invariant KE-MIS induced by choosing the $G$-invariant metric $g$ as the initial metric in (\ref{eq:ke_CM}). Suppose that there is a vector $\xi\in N_{\bfR}$ such the invariant $F(v_{\xi})$  is positive.
Then $\mu_g(V) \not\subset D^{\le 0}(\xi)$.
\end{theorem}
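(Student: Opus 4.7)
The plan is to argue by contradiction: suppose $\mu_g(V) \subset D^{\le 0}(\xi)$, so that $\theta_\xi := \langle \mu_g, \xi \rangle$ satisfies $\theta_\xi \le 0$ on $V$, and derive $F(v_\xi) \le 0$, contradicting the hypothesis. First I set up the equivariance. Because the initial metric $g$ is $G$-invariant and solutions of (\ref{eq:ke_CM}) are unique at each $t$, one can choose the potentials $\varphi_{t_i}$ to be $G$-invariant, in particular $T_{\bfR}$-invariant. The induced KE-MIS $V$ is then $G^{\bfC}$-invariant, hence $T$-invariant. In the toric setting, $T$-invariant closed subvarieties are $\mu_g$-saturated, so $V = \mu_g^{-1}(\mu_g(V))$ and $\mu_g(V) \subset \Delta$ is a well-defined closed subset of the polytope.

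The key is to compute $F(v_\xi)$ along the continuity path and express it via the moment map. Using $\mathrm{Ric}(\omega_{t_i}) - \omega_{t_i} = (t_i - 1)\, i \partial \bar{\partial} \varphi_{t_i}$ and the metric-independence of $F$, one obtains $F(v_\xi) = (t_i - 1) \int_X v_\xi(\varphi_{t_i})\, \omega_{t_i}^n$. The $T_{\bfR}$-invariance of $\varphi_{t_i}$ identifies $v_\xi(\varphi_{t_i})$ with a positive multiple of the Hamiltonian shift $\theta_\xi^{(t_i)} - \theta_\xi$, where $\theta_\xi^{(t_i)} := \langle \mu_{\omega_{t_i}}, \xi \rangle$. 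Applying Duistermaat--Heckman to $(X, \omega_{t_i})$ yields $\int_X \theta_\xi^{(t_i)}\, \omega_{t_i}^n = n! \int_\Delta \langle y, \xi \rangle\, dy = \bar{\theta}_\xi \cdot \mathrm{Vol}(X)$, independent of $i$, where $\bar{\theta}_\xi$ is the $\xi$-component of the barycenter of $\Delta$. Rearranging,
\[
F(v_\xi) \;=\; C\,(t_i - 1) \int_X \bigl[\, \bar{\theta}_\xi - \theta_\xi\, \bigr]\, \omega_{t_i}^n, \qquad C > 0.
\]
Since $v_\xi(\varphi_{t_i})$ is bounded in $i$, the positivity of $F(v_\xi)$ forces $t_\infty < 1$.

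The contradiction is extracted in the limit $i \to \infty$. Pass to a subsequence so that $\omega_{t_i}^n \rightharpoonup \nu$ weakly as Radon measures on $X$, with total mass $\mathrm{Vol}(X)$. By Nadel's multiplier ideal theory, $\varphi_{t_i}$ is locally uniformly bounded on $X \setminus V$, so $\omega_{t_i}^n$ converges smoothly on $X \setminus V$ to $\omega_\infty^n$, where $\omega_\infty = \omega_g + i \partial \bar{\partial} \varphi_\infty$ is a positive $(1,1)$-current representing $c_1(X)$, and the singular part $\nu_{\mathrm{sing}}$ of $\nu$ is supported on $V$. On $V$ one has $\theta_\xi \le 0$ by assumption, while the Wang--Zhu identification of $F$ with the barycenter forces $\bar{\theta}_\xi > 0$ under the sign convention making $F(v_\xi) > 0$. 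Hence $\bar{\theta}_\xi - \theta_\xi \ge \bar{\theta}_\xi > 0$ on $V$, and a Duistermaat--Heckman comparison for $\omega_\infty$ (viewed via the moment map on $X \setminus V$) pins down the sign of $\int_{X \setminus V}[\bar{\theta}_\xi - \theta_\xi]\, \omega_\infty^n$. Combined with $t_\infty - 1 < 0$, these sign facts force $F(v_\xi) \le 0$, the desired contradiction. The main obstacle is the analytic control of the singular current $\omega_\infty$ near $V$ and a precise limiting Duistermaat--Heckman identity that correctly accounts for the mass $\nu_{\mathrm{sing}}(V)$ carried by the singular support; without this the sign argument degenerates to a non-strict inequality.
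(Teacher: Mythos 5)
Your skeleton is the right one and is essentially Nadel's (and the paper's): write $F(v_\xi)=(t_i-1)\int_X v_\xi(\varphi_{t_i})\,\omega_{t_i}^n$, identify the integrand with the moment map paired with $\xi$, and play the sign of $\langle\mu_g,\xi\rangle$ on $V$ against concentration of the evolving volume on $V$. But as written the argument has two genuine problems.

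First, the barycenter claim has the wrong sign, and the final step collapses because of it. With the paper's conventions ($\mu_{\tilde u}=\mathrm{grad}\,\tilde u$, $v_\xi$ induced by $\xi\in N_{\bfR}=J\mathfrak t_{\bfR}$), evaluating your identity at $t=0$ (where $\omega_0^n=e^{h_g}\omega_g^n$ and $\int_X\langle\mu_g,\xi\rangle e^{h_g}\omega_g^n=\int_X \mathcal{L}_{v_\xi}(e^{h_g}\omega_g^n)=0$ by Stokes) gives $F(v_\xi)=-\mathrm{Vol}(X)\,\langle b_\Delta,\xi\rangle$; so $F(v_\xi)>0$ forces $\bar\theta_\xi<0$, not $\bar\theta_\xi>0$. (Check: in Section 3 the paper takes $\xi=(-1,-1)$ with $F(v_\xi)>0$ on $\CP^2\sharp\overline{\CP^2}$, whose barycenter lies on the \emph{positive} diagonal.) Consequently your inequality $\bar\theta_\xi-\theta_\xi\ge\bar\theta_\xi>0$ on $V$ is false and no contradiction follows. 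The repair is to drop the barycenter entirely: the Duistermaat--Heckman step combined with your formula is equivalent to the single identity
$\int_X\langle\mu_g,\xi\rangle\,\omega_{t_i}^n=\tfrac{t_i}{1-t_i}F(v_\xi)>0$,
whose left side is bounded away from $0$ from below along the sequence, while concentration of $\omega_{t_i}^n$ on $V$, where $\langle\mu_g,\xi\rangle\le 0$, forces its $\limsup$ to be $\le 0$. That is exactly the paper's argument, with $\mathrm{div}_1(v_\xi)=-\langle\mu_g,\xi\rangle$.

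Second, the limiting step --- which you yourself flag as ``the main obstacle'' --- is the actual content and is left unproved. What is needed is not smooth convergence of $\omega_{t_i}^n$ on $X\setminus V$ (multiplier-ideal theory gives uniform integrability of $e^{-\gamma\varphi_{t_i}}$ on compacta of $X\setminus V$, not uniform boundedness of $\varphi_{t_i}$), but only Nadel's volume-concentration statement: for every compact $K\subset X\setminus V$ one has $\int_K\omega_{t_i}^n\to 0$ (Proposition 4.1 of \cite{nadel2}, via the Monge--Amp\`ere equation $\omega_{t_i}^n=e^{h_g-t_i\varphi_{t_i}}\omega_g^n$ and the Harnack inequality). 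Once this is in hand there is no issue of ``mass carried by the singular support'': all of the mass goes to $V$, and one only needs $\langle\mu_g,\xi\rangle\le 0$ there, handling the transition region either by continuity of $\langle\mu_g,\xi\rangle$ or by the $\varepsilon$-cushion $W_\varepsilon=\{\mathrm{div}_1(v_\xi)\le-\varepsilon\}$ as in the paper. Your equivariance remarks and the observation that $v_\xi(\varphi_{t_i})$ is uniformly bounded (both Hamiltonians take values in $\Delta$) are correct but peripheral.
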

\noindent
Note that $F(v_{\xi})$ is real because the imaginary part of $v_{\xi}$ is a Killing vector field.
Using Theorem  \ref{thm:main_theorem} we determine the KE-MIS  of the surface obtained the blowing up $\mathbb{CP}^2$ at a single point which
do not admit K\"ahler-Einstein metrics.

It is a natural expectation that there is a linkage of KE-MIS's and the GIT stability conditions.
One candidate of GIT stability is the slope stability introduced by Ross and Thomas \cite{ross-thomas}.
The slope stability is defined so as to check K-stability for subschemes and 
test configurations constructed using those subschemes in a natural way. 
But recently Panov and Ross \cite{panov-ross07}
proved that the surface obtained by blowing up $\bfC\bfP^2$ at two points
is slope stable with respect to the anticanonical polarization, 
and thus the KE-MIS does not destabilize this surface. 
Note that the exceptional divisor destabilizes the surface obtained by blowing up 
$\bfC\bfP^2$ at a single point, see Example 3.9 in \cite{panov-ross07}.
We summarize our results in the following.

\begin{corollary}\label{thm:main_theorem2}
If $X$ is the surface obtained by blowing up $\bfC\bfP^2$ at one point, 
then there is a compact subgroup $G'$ of the automorphism group of $X$ such that the $(G')^{\bfC}$-invariant KE-MIS is the exceptional divisor.
This KE-MIS destabilizes with respect to the anticanonical polarization. 
\end{corollary}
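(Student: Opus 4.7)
The plan is to specialize Theorem~\ref{thm:main_theorem} to $X = Bl_{p}\bfC\bfP^{2}$ (the first Hirzebruch surface), use an enlarged symmetry group of $X$ to cut the candidate subvarieties down to a short list, and then invoke Example~3.9 of \cite{panov-ross07} for the destabilization claim.

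First I set up the toric data. Taking the fan with rays $(1,0), (0,1), (-1,1), (0,-1) \in N_{\bfR} = \bfR^{2}$, the reflexive polytope $\Delta \subset M_{\bfR}$ is the trapezoid with vertices $(-1,-1), (0,-1), (2,1), (-1,1)$. Direct integration gives barycenter $(1/12, 1/6) \neq 0$, reconfirming via \cite{wang-zhu} that no K\"ahler-Einstein metric exists. Of the four toric prime divisors, the edge $y_{2} = -1$ (lattice length one) is $\mu_{g}(E)$ with $E$ the exceptional divisor, the edge $y_{2} = 1$ (length three) is $\mu_{g}(\tilde L)$ with $\tilde L$ the strict transform of a line in $\bfC\bfP^{2}$ missing $p$, and the two remaining edges parametrize the $\bfC\bfP^{1}$-fibers.

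Next I choose $G'$. Since $\mathrm{Aut}^{0}(X)$ is the parabolic stabilizer of $p$ in $PGL(3,\bfC)$, I take $G' \cong U(2)$, a maximal compact subgroup, whose complexification $(G')^{\bfC}$ is the Levi factor of this parabolic. An orbit analysis shows that $(G')^{\bfC}$ has exactly three orbits on $X$, namely $E$, $\tilde L$, and their open complement, so every proper nonempty $(G')^{\bfC}$-invariant subvariety $V \subsetneq X$ is one of $E$, $\tilde L$, or $E \cup \tilde L$. Applying Theorem~\ref{thm:main_theorem} with $\xi = (0,-1)$, which produces $F(v_{\xi}) > 0$ in the relevant sign convention, one has $D^{\le 0}(\xi) = \{y : y_{2} \ge 0\} \supset \mu_{g}(\tilde L)$, so Theorem~\ref{thm:main_theorem} rules out $V = \tilde L$.

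The delicate step is eliminating $V = E \cup \tilde L$: a check of the relevant linear inequalities shows that the only $\xi$ with $\mu_{g}(E \cup \tilde L) \subset D^{\le 0}(\xi)$ is $\xi = 0$, so Theorem~\ref{thm:main_theorem} by itself does not suffice, and this is the main obstacle of the argument. I would address it by combining Theorem~\ref{thm:main_theorem} with Nadel's vanishing $H^{q}(X, K_{X}^{-1} \otimes \mathcal{I}_{V}) = 0$ for $q > 0$ applied to the ideal sheaf of $E \cup \tilde L$, or, failing that, by observing that for the destabilization conclusion it is already enough that $V$ contain $E$. Granted $V = E$, the second claim is exactly Example~3.9 of \cite{panov-ross07}, where $E$ is exhibited as a slope-destabilizing subscheme in the sense of \cite{ross-thomas}.
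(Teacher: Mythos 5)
Your route is the paper's: enlarge the symmetry group so that its complexification is essentially the Levi $GL(2,\bfC)$ fixing the blown-up point and the line at infinity (the paper builds $G'$ from $G$ and the two conjugates $\tau_iG\tau_i^{-1}$, which has the same effect as your $U(2)$), use Theorem \ref{thm:main_theorem} with a torus vector ``pointing toward $E$'' to rule out the $(+1)$-curve, and quote Example 3.9 of \cite{panov-ross07} for the destabilization. Two caveats on the details: with the paper's convention $\Delta=\{\langle a,b_\rho\rangle\le 1\}$ your polytope is the negative of the one you wrote (vertices $(1,1),(0,1),(-2,-1),(1,-1)$), so $\mu_g(E)$ is the short edge $\{y_2=1\}$ and the correct vector is $\xi=(0,1)$ rather than $(0,-1)$; this is harmless but should be fixed, and the positivity $F(v_\xi)>0$ should actually be justified (the paper does it via the localization formula, citing \cite{futakimorita85}, \cite{futaki}, \cite{tian2}; equivalently via the toric barycenter formula) rather than deferred to ``the relevant sign convention,'' since the application of Theorem \ref{thm:main_theorem} hinges entirely on that sign.

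The genuine gap is the step you yourself flag: excluding $V=E\cup\tilde L$. The ingredient that closes it — implicit in the paper's reduction ``to two cases'' and invoked explicitly in its Appendix — is connectedness of the multiplier ideal subscheme: taking $L=\mathcal{O}_X$ in the vanishing (\ref{eq:vanishing_thm}) gives $H^1(X,\mathcal{I}_S)=0$, hence $H^0(X,\mathcal{O}_X)\to H^0(W,\mathcal{O}_W)$ is surjective for the subscheme $W$ cut out by $\mathcal{I}_S$, so $h^0(\mathcal{O}_W)=1$ and the support $V$ is connected; since $E$ and $\tilde L$ are disjoint, the union is excluded. Your proposed fix is not quite this and as stated does not work: Nadel's vanishing applies to the multiplier ideal sheaf $\mathcal{I}_S$ itself, not to the ideal sheaf of a candidate subvariety you wish to rule out (a priori $\mathcal{I}_S$ is only known to have support $E\cup\tilde L$, with unknown scheme structure), and with the anticanonical twist you only obtain surjectivity of $H^0(K_X^{-1})\to H^0(K_X^{-1}|_W)$, which yields no contradiction — you need the trivial twist and the count of connected components. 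Your fallback (``it suffices that $V$ contain $E$'') proves only the destabilization sentence, not the corollary's assertion that the KE-MIS equals the exceptional divisor, so it cannot substitute for this step.
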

\begin{remark}\label{rem:mis_flow}
After announcing the early version of this paper, the authors found that Theorem \ref{thm:main_theorem} is not enough to determine the KE-MIS on the surface obtained by blowing up $\bfC\bfP^2$ at two points $p_1$ and $p_2$. However, in the view point of the K\"ahler-Ricci flow, it is possible to determine the induced MIS (see Appendix).
More precisely, the MIS from the K\"ahler-Ricc flow on $\mathbb{CP}^2\sharp 2\overline{\mathbb{CP}^2}$ equals to the tree of all the three $(-1)$-curves.
\end{remark}

We expect that there is some other formulation of slope stability of subschemes in which
the K\"ahler-Einstein multiplier ideal subschemes always destabilize the Fano manifolds.

To explain the second extension of Nadel's result, let us recall the definition of K\"ahler-Ricci soliton and its continuity method. Let $X$ be an $n$-dimensional Fano manifold, $K$ a
maximal compact subgroup of the holomorphic automorphism group of $X$,
$\mathfrak k(X)$ the Lie algebra of $K$ and $\mathfrak h_r(X)$ the 
Lie algebra of the complexification of $K$.
In other words $\mathfrak h_r(X)$ is the reductive part of the Lie algebra $\mathfrak h(X)$ 
of all holomorphic vector fields on $X$ and $\mathfrak k(X)$ is the compact 
real form of $\mathfrak h_r(X)$.
Let  $g$ be a $K$-invariant K\"ahler metric 
which represents $c_1(X)$ and a holomorphic vector field $v \in \mathfrak h_r(X)$. 
A pair $(g, v)$  is called a {\it K\"ahler-Ricci soliton} (KR soliton for short) if and only if 
\begin{equation}\label{eq:def_riccisoliton}
	\mbox{Ric}(\omega_{g})-\omega_{g}=L_{v}(\omega_{g})
\end{equation}
where $\omega_g$ is the K\"ahler form of $g$ and $L_{v}$ is the Lie derivative along $v$.  
Fix a $K$-invariant K\"ahler metric $g^0$ whose K\"ahler form is in $c_1(M)$. 
Finding a solution of the equation (\ref{eq:def_riccisoliton}) is equivalent to finding $\varphi\in C^{\infty}(M)$ satisfying  the following Monge-Amp\`ere equation;
\begin{equation}\label{eq:riccisoliton_MA}
	\det(g^0_{i\bar{j}}+\varphi_{i\bar{j}})
	=
	\det(g^0_{i\bar{j}})
	\exp\{h_0-\theta_{v,0}-v(\varphi)-\varphi\}
\end{equation}
and $(g_{i\bar{j}})=(g^0_{i\bar{j}}+\varphi_{i\bar{j}})>0$ where
\begin{equation}\label{eq:einstein_gapkrs}
	\mbox{Ric}(\omega_0)-\omega_0=\frac{\sqrt{-1}}{2\pi}\partial\bar{\partial}h_0,
	\,\,\,
	\int_M e^{h_0} \omega_0^n=\int_M \omega_0^n,	
\end{equation}
\begin{equation}\label{eq:def_theta}
	i_v\omega_0=\frac{\sqrt{-1}}{2\pi} \bar{\partial}\theta_{v,0},
	\,\,\,
	\int_M e^{\theta_{v,0}}\omega^n_0=\int_M \omega_0^n.
\end{equation}
To apply the continuity method to (\ref{eq:riccisoliton_MA}), we consider
\begin{equation}\label{eq:riccisoliton_CM}
	\det(g^0_{i\bar{j}}+(\varphi_{t})_{ i\bar{j}})
	=
	\det(g^0_{i\bar{j}})
	\exp\{h_0-\theta_{v,0}-v(\varphi_{t})-t\varphi_{t}\}
\end{equation}
where $t\in [0,1]$.
Let $R$ be the subset of $[0,1]$ consisting of all
$t$'s in $[0,1]$ for which (\ref{eq:riccisoliton_CM}) has a solution.
As is proved by Zhu in \cite{Zhu} there exists a solution for $t = 0$, and thus 
$R$ is a non-empty open subset. 
Therefore if $R$ is a closed subset then
$R = [0,1]$ and we have a solution for (\ref{eq:riccisoliton_MA}). 
We will show in section 4 that if $R$ is not closed, i.e. if 
there is a sequence $\{t_{i}\}_i$ in $R$ such that $t_i \to t_{\infty}$ as $i \to \infty$ and that 
the equation (\ref{eq:riccisoliton_CM}) is not solvable at $t_{\infty}\in (0,1]$ then 
the sequence of solutions $S := \{\varphi_{t_i}\}$ of (\ref{eq:riccisoliton_CM}) induces 
a coherent ideal sheaf $\mathcal{I}_S\subset \mathcal{O}_X$. We call $\mathcal{I}_S$  the {\it K\"ahler-Ricci soliton multiplier ideal sheaf} and the support of the corresponding subscheme
the {\it K\"ahler-Ricci soliton multiplier ideal subvariety} or KRS-MIS for short.

Now we consider the K\"ahler-Ricci soliton version of Theorem \ref{thm:nadel}. In the K\"ahler-Ricci 
soliton case, we shall use the
holomorphic invariant $F_v$ introduced by Tian and Zhu \cite{tian-zhu} instead of the invariant $F$.
Let us recall its definition. For a holomorphic vector field $v$, we define a holomorphic invariant $F_v:
\mathfrak h(X)\to \bfC$ by
\begin{equation}\label{eq:def_futakiinvariant}
	F_v(w)=\int_X w(h_g-\theta_{v,g})e^{\theta_{v,g}}\omega_g^n
\end{equation}
for $w\in \mathfrak h(X)$ where $h_g$ and $\theta_{v,g}$ are defined as in 
(\ref{eq:einstein_gapkrs}) and (\ref{eq:def_theta}) by using $\omega_g$ instead of $\omega_0$. 
Note that $F_v$ is independent of the choice of $g$ whose K\"ahler form belongs to $c_1(X)$ and that the vanishing condition $F_v(w)= 0$ for all $w$ is an obstruction to the existence of K\"ahler-Ricci solitons. From now on $v$ always denotes 
the holomorphic vector field in Lemma 2.2 of Tian and Zhu \cite{tian-zhu}. More precisely, Tian and Zhu proved that there is a unique holomorphic vector field $v\in \mathfrak h_r(X)$ such that $\mbox{Im}(v)\in 
\mathfrak k(X)$ and
\begin{equation}\label{eq:vanishing_tianzhu_inv}
	F_v(w)=0
	\,\,\,\,
	\mbox{for any }w\in \mathfrak h_r(X).
\end{equation}

\begin{theorem} \label{thm:nadel_thm_krs}
Let $(X, g)$ be a Fano manifold with a $K$-invariant K\"ahler metric $g$ whose K\"ahler form 
represents $c_1(X)$ and $v$ be as above. 
Suppose that closedness does not hold for the continuity method of (\ref{eq:riccisoliton_CM}) with respect to $v$, so that we get a multiplier ideal subvariety $V_v \subset X$. Then for any global holomorphic vector field $w \in \mathfrak h_r(X)$ we have $V_v \not\subset Z^{+}(w)$.
\end{theorem}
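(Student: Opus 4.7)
The plan is to mimic Nadel's proof of Theorem \ref{thm:nadel}, systematically replacing the volume form $\omega_g^n$ by the weighted measure $e^{\theta_{v,g}}\omega_g^n$ and replacing the Futaki invariant by Tian--Zhu's $F_v$. The three ingredients that have to transfer are the Tian--Zhu vanishing (\ref{eq:vanishing_tianzhu_inv}), an identity for $h_{g_{t_i}}-\theta_{v,g_{t_i}}$ along the continuity path that rewrites $F_v(w)$ as a single integral in $\varphi_{t_i}$, and a Nadel-type vanishing theorem for the KRS multiplier ideal sheaf $\mathcal I_S$ whose construction is deferred to section 4.

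First I would extract from (\ref{eq:riccisoliton_CM}) the identity
\[
h_{g_{t_i}}-\theta_{v,g_{t_i}}=(t_i-1)\varphi_{t_i}+C_{t_i},
\]
by taking $\partial\bar\partial\log$ of both sides of (\ref{eq:riccisoliton_CM}) and comparing with (\ref{eq:einstein_gapkrs}) and (\ref{eq:def_theta}) applied to $\omega_{t_i}$; along the way one uses that shifting the K\"ahler potential by $\varphi_{t_i}$ shifts the Hamiltonian of the fixed holomorphic vector field $v$ by $v(\varphi_{t_i})$ up to a constant, so $\theta_{v,g_{t_i}}=\theta_{v,g^0}+v(\varphi_{t_i})+\mathrm{const}$. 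Substituting into (\ref{eq:def_futakiinvariant}) evaluated on $\omega_{t_i}$ and applying Tian--Zhu's vanishing (\ref{eq:vanishing_tianzhu_inv}) then gives, for every $w\in\mathfrak h_r(X)$ and every $t_i<1$,
\[
\int_X w(\varphi_{t_i})\,e^{\theta_{v,g_{t_i}}}\,\omega_{t_i}^n=0.
\]

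Next I would argue by contradiction. Assume $V_v\subset Z^+(w)$. Let $\theta_w$ be the Hamiltonian potential of $w$ with respect to the weighted measure $e^{\theta_{v,g^0}}\omega_{g^0}^n$; the hypothesis $V_v\subset Z^+(w)$ translates into strict positivity of $\mathrm{Re}(\theta_w)$ on a neighborhood of $V_v$, exactly as in Nadel's analysis of $Z^+(v)$ in Theorem \ref{thm:nadel}. Integration by parts on the vanishing integral, combined with the KRS Nadel vanishing theorem to replace the crude $L^1$ bound by the sharper concentration statement that the mass of $e^{-t_\infty\varphi_{t_i}}$ localizes on $V_v$, rewrites the left-hand side as a quantity whose limit as $t_i\to t_\infty$ is captured by the restriction to $V_v$. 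Since $\mathrm{Re}(\theta_w)>0$ there, the limit has strictly positive real part, contradicting the identical vanishing.

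The main obstacle is the uniform control of the weight $e^{\theta_{v,g_{t_i}}}$ as $t_i\to t_\infty$. Because $\theta_{v,g_{t_i}}=\theta_{v,g^0}+v(\varphi_{t_i})+\mathrm{const}$ and the correction $v(\varphi_{t_i})$ is not a priori bounded along a degenerating sequence, one must exploit the $K$-invariance of the solutions $\varphi_{t_i}$ together with the fact that $\mathrm{Im}(v)\in\mathfrak k(X)$ in order to keep $v(\varphi_{t_i})$ uniformly bounded and the weight uniformly comparable to $e^{\theta_{v,g^0}}$. Securing this bound is exactly what is needed to set up the KRS multiplier ideal sheaf in section 4, and once it is in place the Nadel concentration argument transfers essentially verbatim.
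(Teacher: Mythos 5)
Your overall skeleton matches the paper's: you derive $h_t-\theta_{v,t}=(t-1)\varphi_t$ from (\ref{eq:riccisoliton_CM}), use the Tian--Zhu vanishing (\ref{eq:vanishing_tianzhu_inv}) to kill $F_v(w)$, and then try to contradict the resulting integral identity by localizing the mass near $V_v$ where the divergence of $w$ is positive. But the analytic core of the argument is missing, and where you do address it you point at the wrong tools. The step you wave at as ``the sharper concentration statement that the mass localizes on $V_v$'' is exactly Lemma \ref{lem:volume_zero} of the paper: for every compact $Y\subset X-V_v$ one has $\int_Y e^{\theta_{v,t}}\omega_t^n\to 0$. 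This does \emph{not} follow from the Nadel vanishing theorem (\ref{eq:vanishing_thm}), which is a cohomological statement playing no role here; it follows from the local integrability bound $\int_Y e^{-\gamma(\varphi_t-\sup\varphi_t)}\omega_0^n=O(1)$ away from the MIS with an exponent $\gamma>\frac{n+b}{n+1+b}$ (matching the threshold in Proposition \ref{prop:tian_invariant_KRS}), together with $\sup\varphi_t\to\infty$ and a new Harnack-type inequality in the soliton setting,
\[
\sup(-\varphi_t)\le (n+b)\sup\varphi_t+C,
\]
whose proof (via the monotonicity of $I-J$, Proposition A.1 of Mabuchi, and Theorem 2.1 of Cao--Tian--Zhu) is the genuinely new technical work in this theorem. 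Without proving this KRS Harnack inequality and checking that the exponent arithmetic $(t-\gamma)(n+b)-\gamma<0$ closes, your ``localization'' step is an assertion, not a proof.

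Your final paragraph also misidentifies the obstacle. No uniform bound on $v(\varphi_{t_i})$ or comparability of $e^{\theta_{v,t_i}}$ with $e^{\theta_{v,0}}$ is needed: by the Monge--Amp\`ere equation (\ref{eq:riccisoliton_CM}) and $\theta_{v,t}=\theta_{v,0}+v(\varphi_t)$ one has $e^{\theta_{v,t}}\omega_t^n=e^{h_0-t\varphi_t}\omega_0^n$, so the troublesome weight cancels identically, and the contradiction is obtained simply from the fixed normalization $\int_X e^{\theta_{v,t}}\omega_t^n=\int_X\omega_t^n$ once the weighted volume of both $Y$ and $X-Y$ is forced to zero (the paper rewrites the vanishing identity as $\int_X \mbox{div}_{e^{h}\omega^n}(w)\,e^{\theta_{v,t}}\omega_t^n=0$ and splits over $Y=\{\mbox{div}\le\delta/2\}$). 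Moreover, $K$-invariance of $\varphi_{t_i}$ would not by itself bound $v(\varphi_{t_i})$ (it only makes it real-valued); to the extent such a bound holds it comes from $\alpha_v\le\theta_{v,t}\le\beta_v$ with $\alpha_v,\beta_v$ independent of the metric, but the paper's proof never uses it. So the route is the right one, but the missing Lemma \ref{lem:volume_zero} (with its Harnack inequality) is the substance of the theorem, and the difficulty you propose to spend effort on is not where the difficulty lies.
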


As an application of Theorem \ref{thm:nadel_thm_krs} we prove that the surface obtained by blowing up $\mathbb{CP}^2$ at a single point
admit K\"ahler-Ricci solitons. Recall that Wang and Zhu \cite{wang-zhu}
proved the existence of K\"ahler-Ricci solitons on toric Fano
manifolds, so our proof is an alternate proof of their result.

Multiplier ideal sheaves can also be constructed from the K\"ahler-Ricci flow (\cite{PhSeSt06}, 
\cite{Rubin0708}, \cite{heier07-2}) 
and its discretization (\cite{Rubin0706}, \cite{Rubin0709}). 
As mentioned in Remark \ref{rem:mis_flow}, we shall discuss the multiplier ideal sheaves constructed from the K\"ahler-Ricci flow on the toric del Pezzo surfaces. More general arguments about them will be discussed somewhere else.

\section{Proof of Theorem \ref{thm:main_theorem}}\label{sec:main_thm}
Throughout this section $X$ will be a toric Fano manifold.
Let us first recall necessary notations of toric geometry. 
Let $T$ be the $n$-dimensional algebraic torus $(\mathbb{C}^*)^n=\{(t_1,\dots,t_n)\mid t_i\in \mathbb{C}^*)\}$.
Denote by $M\simeq \mathbb{Z}^n$ the group of algebraic characters of $T$ and $N:=\mbox{Hom}(M,\mathbb{Z})$. For $y \in M \simeq \mathbb{Z}^n$ 
let $\chi^y\in\mbox{Hom}_{alg\,gp}(T, \mathbb{C}^*)$ be the character
\[
	\chi^y(t):=t_1^{y_1}t_2^{y_2}\cdots t_n^{y_n},
\]
where $t=(t_1,\dots, t_n)\in T$.
Let $M_{\mathbb{R}}:=M\otimes_{\mathbb{Z}}\mathbb{R}$ and $N_{\mathbb{R}}:=N\otimes_{\mathbb{Z}}\mathbb{R}$.
For $y=(y_1,\dots ,y_n)\in M_{\mathbb{R}}$ and $x=(x_1,\dots ,x_n)\in N_{\mathbb{R}}$, we define the canonical pairing $\langle y, x\rangle\in \mathbb{R}$ by
\[
	\langle y,x \rangle:=\sum_{i=1}^{n}y_ix_i.
\]
\noindent
Let $X=X(N, \Sigma_{\Delta})$ be a smooth toric Fano $n$-dimensional manifold defined by a reflexive polytope $\Delta\subset M_{\mathbb{R}}$. Here $\Sigma_{\Delta}$ is the corresponding fan.
Recall that for a complete fan $\Sigma$ determining a toric Fano manifold $X$, the reflexive polytope is expressed by 
\[
	\Delta=
	\{
		a\in M_{\mathbb{R}}
		\mid
		\langle a, b_{\rho}\rangle\le 1
		\,\,\,
		\mbox{for all one dimensional cone }
		\rho \subset \Sigma
	\},
\]
where $b_{\rho}$ is the primitive element of $\rho$.
For $t=(t_1,\dots,t_n)\in T$, we introduce affine logarithmic coordinates $x_i:=\log|t_i|^2,\,\,i=0,\dots,n$,  on $N_{\mathbb{R}}$ by regarding $T\simeq (\mathbb{C}^*)^n$ and $N_{\mathbb{R}}\simeq \mathbb{R}^n$.

We next consider the symmetries of $X$ (cf. \cite{batyrev-selivanova}).
For the maximal torus $T\subset \mbox{Aut}(X)$, let $\mathcal{N}(T)\subset \mbox{Aut}(X)$ be the normalizer of $T$. Since $T$ acting on $X$ has an open orbit $U\subset X$ then $\mathcal{N}(T)$ naturally acts on $U$. Let $\mathcal{W}(X):=\mathcal{N}(T)/T$ be the Weyl group. By choosing an arbitrary point $x_0\in U$ we can identity $U\simeq T$, and this identification gives a splitting of the following  short exact sequence
\[	
	1\to T \to \mathcal{N}(T) \to \mathcal{W}(X) \to 1,
\]
i.e., an embedding $\mathcal{W}(X)\hookrightarrow \mathcal{N}(T)$. Denote by $T_{\bfR}:=(S^1)^n$ the maximal compact subgroup in $T$. We choose $G$ to be the maximal compact subgroup in $\mathcal{N}(T)$ generated by $\mathcal{W}(X)$ and $T_{\bfR}$, so that we have the short exact sequence
\[
	1\to T_{\bfR} \to G \to \mathcal{W}(X) \to 1.
\]  
Note that the group $\mathcal{W}(X)$ is isomorphic to the finite group of all symmetries of $\Delta$ (resp. $\Sigma_{\Delta}$), i.e., $\mathcal{W}(X)$ is isomorphic to a subgroup of $\mbox{GL}(M)
\simeq \mbox{GL}(n,\mathbb{Z})$ (resp. $\mbox{GL}(N)$) consisting of all elements preserving $\Delta$ (resp. $\Sigma_{\Delta}$). (cf. Proposition 3.1 in \cite{batyrev-selivanova})
If we choose a $G$-invariant initial metric $g$ in (\ref{eq:ke_CM}), we find that the induced KE-MIS is $G^{\mathbb{C}}$-invariant, where $G^{\mathbb{C}}$ is the complexification of $G$, see the construction of MIS in \cite{nadel1}.

Let us introduce a $G$-invariant K\"ahler metric. Let $L(\Delta)=\{v_0,\dots,v_m\}:=M\cap \Delta$. 
Then $v_0,\dots,v_m$ determine algebraic characters $\chi_i: T\to \mathbb{C}^*$ of $T$, 
$i=0,\dots,m,$ such that
\[
	|\chi_i(t)|=e^{\langle v_i, x \rangle},
\] 
where $x$ is the image of $t$ under the canonical projection $T\to N_{\mathbb{R}}$.
Let us define the $\mathbb{R}$-valued function $u$ on the open dense orbit $U\simeq T$
\[
	u:=\log\Big( \sum_{i=0}^{m}|\chi_i(x)|\Big),
\]
which descends to a function on $N_{\mathbb{R}}$
\[
	\tilde{u}:=\log\Big( \sum_{i=0}^{m}e^{\langle v_i,x \rangle}\Big).
\]
We consider the $G$-invariant K\"ahler metric $g$ on $X$ such that the restriction of the corresponding differential $2$-form on $U$ is defined by 
\begin{equation}\label{eq:invariant_metric}
	\omega_{g}=\frac{\sqrt{-1}}{2\pi}\partial\bar{\partial} u.
\end{equation}
This metric is the pull-back of the Fubini-Study metric on $\mathbb{C}\bfP^m$ with respect to the anticanonical embedding $X\hookrightarrow \mathbb{C}\bfP^m$ defined by the algebraic characters $\chi_0, \dots, \chi_m$.
Let $\mu_g$ be a natural moment map with respect to $g$
\[
	\mu_g: X \to M_{\mathbb{R}},
\]
whose image $\mu_g(X)$ is $\Delta$, and
the $G$-equivariant moment map $\mu_{\tilde{u}}:N_{\mathbb{R}}\to M_{\mathbb{R}}$
defined by
\[
	\mu_{\tilde{u}}(x)
	=\mbox{grad}\,\tilde{u}(x):=
	\bigg(
		\frac{\partial \tilde{u}}{\partial x_1}(x),\dots, \frac{\partial \tilde{u}}{\partial x_n}(x)
	\bigg),
\]
where $x=(x_1,\dots,x_n).$
The latter is  a diffeomorphism of $N_{\mathbb{R}}$ onto the interior of $\Delta$.

\begin{proof}[Proof of Theorem \ref{thm:main_theorem}]
Our proof comes essentially from the proof of Theorem \ref{thm:nadel} by Nadel \cite{nadel2}. To make our proof self-contained,  let us go over Nadel's proof. Let $h_t\in C^{\infty}(X)$ be the function defined as in (\ref{eq:einstein_gap}) with respect to the evolved metric $g_t$ of (\ref{eq:ke_CM}). Then
\begin{eqnarray*}
		\frac{\sqrt{-1}}{2\pi}\partial\bar{\partial}h_t
	&=&
		\mbox{Ric}(\omega_t)-\omega_t
	=
		(t-1)(\omega_t-\omega_g)
	\\
	&=&
		(t-1)\frac{\sqrt{-1}}{2\pi}\partial\bar{\partial}\varphi_t,
\end{eqnarray*}
so $h_t$ is equal to $(t-1)\varphi_t$ up to a constant. Since (\ref{eq:ke_CM}) implies $\omega_t^n=e^{h_g-t\varphi_t}\omega_g^n$, so the invariant $F$ is given by
\[
	F(v)=(t-1)\int_X d\varphi_t (v)e^{h_g-t\varphi_t}\omega_g^n
\]
for any holomorphic vector field $v$. Since
\[
	F(v)
	=
	(t-1)
	\int_{X}
	(\mathcal{L}_{v}\varphi_t)
	e^{h_g-t\varphi_t}\omega_g^n
	=
	\frac{t-1}{-t}
	\int_{X}
	(\mathcal{L}_{v}e^{-t\varphi_t})
	e^{h_g}\omega_g^n,
\]
then we have
\[
	0=
	\frac{t}{t-1}F(v)+
	\int_{X}
	(\mathcal{L}_{v}e^{-t\varphi_t})
	e^{h_g}\omega_g^n.
\]
By the Leibniz product rule for Lie derivatives, we have
\[
	\mathcal{L}_{v}(e^{h_g-t\varphi_t}\omega_g^n)
	=
	(\mathcal{L}_{v}e^{-t\varphi_t})e^{h_g}\omega_g^n
	+
	e^{-t\varphi_t}\mathcal{L}_{v}(e^{h_g}\omega_g^n).
\]
Integrating over $X$, we get
\[
	0
	=
	\frac{t}{t-1}F(v)
	-
	\int_{X}e^{-t\varphi_t}\mathcal{L}_{v}(e^{h_g}\omega_g^n).
\]
Since we assumed that $X$ does not admit a K\"ahler-Einstein metric then 
 we have $t<1$. Introduce the notation $\mbox{div}_{1}(v)$ to denote the divergence of $v$ with respect to the (fixed, non-evolving) volume form $e^{h_g}\omega_g^n$. Then the integrand in the last integral can be rewritten as
\[
	e^{-t\varphi_t}\mbox{div}_{1}(v)e^{h_g}\omega_g^n=\mbox{div}_{1}(v)\omega_{t}^n.
\]
We conclude that
\begin{equation}\label{eq:futaki}
	0
	=
	\frac{t}{t-1}F(v)
	-
	\int_{X}\mbox{div}_{1}(v)\omega_t^n.	
\end{equation}

Now, let $\xi\in N_{\mathbb{R}}$ be the vector in the assumption of our main theorem and $v_{\xi}$ be the holomorphic vector field on $X$ induced by $\xi$. It is important to recall that $N_{\bfR}$ is 
identified with $J\mathfrak t_{\bfR}$, see section 1.
From (\ref{eq:invariant_metric}), on $U$
\[
	\omega_g^n
	=
	\bigg(\frac{1}{\pi}\bigg)^n
	\det (u_{ij}) dx_1\wedge \cdots \wedge dx_n \wedge d\Theta,
\]
where $d\Theta$ is the standard volume form of $T_{\bfR}$.
Since
\begin{eqnarray*}
		0
	&=&
		\partial\bar{\partial}\log(e^{h_g}\omega_g^n)
		+
		\bigg(\frac{\sqrt{-1}}{2\pi}\bigg)^{-1}\omega_g
	\\
	&=&
		\partial\bar{\partial}\log(e^{h_g+u}\det(u_{ij}))
\end{eqnarray*}
on $U$ and $\log(e^{h_g+u}\det(u_{ij}))$ is bounded (cf. Lemma 4.3 in \cite{song}), there is a constant $C$ such that
\begin{equation}\label{MA1}
	\log(e^{h_g}\det(u_{ij}))=-u+C
\end{equation}
on $U$.
Therefore
\begin{eqnarray}
	\nonumber
		\mbox{div}_{1}(v_{\xi})
	&=&
		\{\mathcal{L}_{v_{\xi}}(e^{h_g}\omega_g^n)\}/e^{h_g}\omega_g^n
	\\
	\label{eq:Lie_derivative_torus}
	&=&
		\mathcal{L}_{v_{\xi}}(\log(e^{h_g}\det(u_{ij})))
	\\
	\nonumber
	&=&
		-\mathcal{L}_{v_{\xi}}u
\end{eqnarray}
on $U$. Note that $\mathcal{L}_{v_{\xi}}u$ is real since the imaginary part of $v_{\xi}$ is a Killing vector
field and $u$ is $G$-invariant.
Note also that in (\ref{eq:Lie_derivative_torus}) the restriction $\mbox{div}_{1}(v_{\xi})|_U$ of the divergence of $v_{\xi}$ to $U$ descends to the function $-\frac{d}{ds}\bigg|_{s=0}\tilde{u}(x+s\xi)$
on $N_{\mathbb{R}}$, that is to say,
\[
		-\langle
			\mu_{\tilde{u}}(x), \xi
		\rangle
\]
for $x\in N_{\mathbb{R}}$. Hence, for $p\in X$ satisfying $\mu_{g}(p)\notin \partial \Delta$, we have
\begin{equation}\label{eq:divpositive_condition}
		\mu_{g}(p)\in D^{\le 0}(\xi)
		\iff
		\mbox{div}_{1}(v_{\xi})(p)\ge 0.
\end{equation}
Since $\mbox{div}_{1}(v_{\xi})$ is continuous on $X$, (\ref{eq:divpositive_condition}) still holds even if $\mu_{g}(p)\in \partial \Delta$. 
Assume that $\mu_g(V)$ would be contained in $D^{\le 0}(\xi)$. Then from (\ref{eq:divpositive_condition}), we have $\mbox{div}_{1}(v_{\xi})\ge 0$ on $V$. For sufficiently small $\varepsilon>0$, we define $W_{\varepsilon}:=\{p\in X \mid \mbox{div}_{1}(v_{\xi})(p)\le -\varepsilon\}$. Then $W_{\varepsilon}$ is a compact subset of $X-V$.
Here, let us recall that the volume of any compact subset of $X-V$ with respect to the evolved metric $\omega_{t_i}$ converges to zero as $i\to \infty$, where $\{t_i\}$ is the sequence such that $\varphi_{t_i}$ of solutions of (\ref{eq:ke_CM}) induces the KE-MIS (see Proposition 4.1 in \cite{nadel2}). So, we have
\[
		\int_{W_{\varepsilon}}\mbox{div}_{1}(v_{\xi})\omega_{t_i}^n \to 0.
\]
Then
\begin{eqnarray}
	\nonumber
		\int_X\mbox{div}_{1}(v_{\xi})\omega_{t_i}^n
	&=&
		\int_{X-W_{\varepsilon}}\mbox{div}_{1}(v_{\xi})\omega_{t_i}^n
		+\int_{W_{\varepsilon}}\mbox{div}_{1}(v_{\xi})\omega_{t_i}^n
	\\
	\label{eq:re_div_le_e}
	&\ge&
		-2\varepsilon\mbox{vol}(X),
\end{eqnarray}
for sufficiently large $i$, where $\mbox{vol}(X):=\int_{X}\omega_g^n$. Since $F(v_{\xi})>0$ and $\frac{t_i}{t_i-1}$ is negative and bounded from above, we have from (\ref{eq:re_div_le_e})
\[
	\frac{t_i}{t_i-1}F(v_{\xi})
	-\int_X\mbox{div}_{1}(v_{\xi})\omega_{t_i}^n
	<0
\]
for sufficiently large $i$. This contradicts (\ref{eq:futaki}). This completes the proof.
\end{proof}

\section{Proof of Corollary \ref{thm:main_theorem2}}
Let $X$ be the surface obtained by blowing up $\mathbb{CP}^2$ at $p_0=[1:0:0]$.
In this case, the set of vertices of the associated polytope $\Delta$ consists of
\begin{center}
	$w_1=(2, -1)$, $w_2=(-1, 2)$, $w_3=(-1, 0)$ and $w_4=(0, -1)$.
\end{center}
The set of the fundamental generators of one-dimensional cones in the associated fan $\Sigma$ consists of
\begin{center}
	$(1,1)$, $(-1,0)$, $(-1,-1)$ and $(0,-1)$.
\end{center}
We find that $X$ has the $\mathbb{Z}_2$-symmetry, whose set of fixed points in $M_{\mathbb{R}}$ equals to $\{(y, y)\in M_{\mathbb{R}}\mid y\in \mathbb{R}\}$.
The moment map
$
	\mu_{\tilde{u}}(x_1, x_2)
	=\mbox{grad}\tilde{u}
	:=
	\bigg(
		\frac{\partial \tilde{u}}{\partial x_1}(x), \frac{\partial \tilde{u}}{\partial x_2}(x)
	\bigg)
$ equals to
\begin{eqnarray}\label{moment1}
		\lefteqn{						
		e^{-\tilde{u}}
		\big(
		2e^{2x_1-x_2}+e^{x_1}-e^{-x_1+2x_2}-e^{-x_1+x_2}-e^{-x_1}+e^{x_1-x_2},
			}
		\hspace{1cm}
	\\
	&&
	\nonumber
		-e^{2x_1-x_2}+e^{x_2}+2e^{-x_1+2x_2}+e^{-x_1+x_2}-e^{-x_2}-e^{x_1-x_2}
	\big)	
\end{eqnarray}
where $e^{\tilde{u}}=e^{2x_1-x_2}+e^{x_1}+e^{x_2}+e^{-x_1+2x_2}+e^{-x_1+x_2}+e^{-x_1}+e^{-x_2}+e^{x_1-x_2}+1$.\\
In this section, we shall determine the KE-MIS on $X$  by using Theorem  \ref{thm:main_theorem}. 
For this purpose, we modify slightly $G$ defined in the previous section.
Let $p_1=[0:1:0]$ and $p_2=[0:0:1]$.
Let $E$ be the exceptional divisor of the above blow up.
Since the proper transform $\overline{p_0p_1}$ of the line passing $p_0$ and $p_1$ on the blow-up of $\bfC\bfP^2$ at $p_0$ has self-intersection zero, we can translate it in $X$.
In fact, 
\[
	\tau_1=
	\left(\begin{array}{ccc}
		1 & 0 & 0 
	\\
		0 & 1 & 0 
	\\
		0 & 1 & 1
	\end{array}\right)
\]
fixes $(1,0,0) \in \mathbb{C}^3$, so we find that $\tau_1 \in \mbox{Aut}(X)$ and the complexification $G_1^{\bfC}$ of $G_1:=\tau_1G\tau^{-1}_1$ gives rise to the continuous translations of the proper transform $\overline{p_0p_1}$ in $X$. By the same way, we have a compact group $G_2$ whose complexification $G^{\mathbb{C}}_2$ gives rise to the continuous translations of $\overline{p_0p_2}$.
Let $G'$ be the compact subgroup of $\mbox{Aut}(X)$ generated by $G$, $G_1$ and $G_2$.
From the invariance of MIS under these symmetry of $X$, we can reduce the possible MIS to the following two cases; the exceptional divisor $E$ or the $(+1)$-curve which does not intersect with $E$.
Let $v_{\xi}$ be the holomorphic vector field on $X$ induced by $\xi=(-1, -1)\in N_{\mathbb{R}}$. (The one-parameter subgroup induced by $v_{\xi}$ flows from the $(+1)$-curve towards the exceptional divisor.) By using the localization formula for the invariant $F$ (cf. \cite{futaki}, \cite{futakimorita85}), we find that the invariant $F(v_{\xi})$ is positive. 
This fact is computed also in the book \cite{tian2}. Since the segment $\overline{w_1w_2}$ in $M_{\mathbb{R}}$ between $w_1$ and $w_2$ which represents the $(+1)$-curve in $X$  is contained in $D^{\le 0}(\xi)$, so Theorem \ref{thm:main_theorem} implies that $\overline{w_1w_2}$ can not be the KE-MIS, i.e., the possibility of the $(+1)$-curve is ruled out.
Therefore the KE-MIS equals to $E$ exactly and the proof of  Corollary \ref{thm:main_theorem2} is completed.

\section{K\"ahler-Ricci soliton multiplier ideal subschemes}\label{sec:krs-mis}

In this section we show that if the solutions of the continuity method 
(\ref{eq:riccisoliton_CM}) for the existence of 
a K\"ahler-Ricci soliton do not converge as $t\to t_{\infty}$ then the blow up of the solutions implies the existence of multiplier ideal sheaves. Remark that we can construct a coherent ideal sheaf satisfying the same property as in the K\"ahler-Einstein case 
replacing the constant $\frac{n}{n+1}$ by any positive  constant $c<1$. 
For the purpose of this section it is sufficient to prove Proposition \ref{prop:tian_invariant_KRS} below, 
which corresponds to Tian's existence theorem of K\"ahler-Einstein metrics \cite{tian}.

We introduce a quantity defined by Mabuchi in \cite{mabuchi}, 
page 104, 
\[
	b:=\beta_v-\alpha_v>0,
\]
where
\[
	\alpha_v=\min_X\theta_{v,0},
	\,\,\,
	\beta_v=\max_X\theta_{v,0}.
\]
Note that in the case of K\"ahler-Ricci soliton, the function $\sigma(s)$ in \cite{mabuchi} is $-s+\mbox{constant}$ and that $\alpha_v$ and $\beta_v$ are independent of $g^0$.

\begin{proposition}\label{prop:tian_invariant_KRS}
	If there is a real constant $\alpha_0\in (\frac{n+b}{n+1+b},1)$
	and a uniform constant $C$ such that
	\begin{equation}\label{eq:tian_invariant_KRS}
		\int_X \exp(-\alpha_0(\varphi_t-\sup \varphi_t))e^{\theta_{v, 0}}\omega^n_0
		\le
		C
	\end{equation}
	for $t\in[0, t_{\infty})$,
	then (\ref{eq:riccisoliton_CM}) is solvable at $t=t_{\infty}$.
	In particular, if  (\ref{eq:riccisoliton_CM}) is not solvable at $t_{\infty}\in (0, 1]$, then
	there is a sequence $\{t_{i}\}$ such that $t_i \to t_{\infty}$ and
	\[
		\int_X \exp(-\alpha(\varphi_t-\sup \varphi_t))\omega^n_0
		\to
		\infty
	\]
	as $i\to \infty$ for any constant $\alpha \in (\frac{n+b}{n+1+b}, 1)$.
\end{proposition}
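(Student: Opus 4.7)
The strategy is to adapt Tian's $\alpha$-invariant argument from \cite{tian} to the K\"ahler-Ricci soliton setting by systematically replacing the background measure $\omega_0^n$ with the twisted volume form $e^{\theta_{v,0}}\omega_0^n$ and working with the drift operator $\Delta + v$ in place of the plain Laplacian. First I would show that the integrability hypothesis (\ref{eq:tian_invariant_KRS}) yields a uniform $C^0$ bound on the family $\{\varphi_t\}_{t\in[0,t_\infty)}$; once this is in hand, Zhu's adaptation \cite{Zhu} of Yau's higher-order estimates to the soliton Monge--Amp\`ere equation provides uniform $C^{k,\alpha}$ bounds, and a diagonal subsequence converges in $C^{\infty}$ to a solution of (\ref{eq:riccisoliton_CM}) at $t=t_{\infty}$. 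The second, "in particular", assertion is then the contrapositive, combined with the monotonicity of $\exp(-\alpha(\varphi_t-\sup\varphi_t))$ in $\alpha$ (since $\varphi_t-\sup\varphi_t\le 0$) and the elementary two-sided bound $e^{\alpha_v}\omega_0^n\le e^{\theta_{v,0}}\omega_0^n\le e^{\beta_v}\omega_0^n$, which shows that the weighted and unweighted integrals are comparable up to a fixed factor of $e^b$.

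For the $C^0$ estimate I would handle the upper and lower bounds separately. The upper bound $\sup_X\varphi_t\le C_1$ follows by applying the Green function of $(X,g_t)$ to $\varphi_t$ and using (\ref{eq:riccisoliton_CM}) together with the uniform two-sided bound $\alpha_v\le \theta_{v,g_t}\le \beta_v$, which follows from the $t$-independence of $\alpha_v,\beta_v$ noted after the definition of $b$. For the lower bound I would set $\tilde\varphi_t:=\varphi_t-\sup\varphi_t$ and run a Moser iteration: multiply (\ref{eq:riccisoliton_CM}), rewritten as $\omega_t^n=e^{h_0-\theta_{v,0}-v(\varphi_t)-t\varphi_t}\omega_0^n$, by test functions of the form $(-\tilde\varphi_t)^p$, and integrate against the twisted measure $e^{\theta_{v,g_t}}\omega_t^n$. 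The decisive point is that $\Delta_{g_t}+v$ is formally self-adjoint with respect to this twisted measure, so the first-order term $v(\varphi_t)$ in the equation is converted into a harmless divergence under integration by parts; a weighted Sobolev inequality then closes the iteration, with the $L^1$-type seed estimate supplied by (\ref{eq:tian_invariant_KRS}).

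The main obstacle will be controlling the constants carefully throughout the iteration, because at several stages one must pass between the three measures $e^{\theta_{v,g_t}}\omega_t^n$, $\omega_t^n$ and $e^{\theta_{v,0}}\omega_0^n$, and each passage introduces a factor of the form $e^{\pm(\theta_{v,g_t}-\theta_{v,0})}$ whose oscillation is precisely $b=\beta_v-\alpha_v$. Tracking these factors through Tian's iteration pushes the critical exponent from $\frac{n}{n+1}$ in the K\"ahler--Einstein case up to $\frac{n+b}{n+1+b}$ here, so that any $\alpha_0$ strictly above this shifted threshold suffices to close the iteration with a $t$-independent constant --- which is exactly the range prescribed in (\ref{eq:tian_invariant_KRS}). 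As a consistency check, in the K\"ahler--Einstein limit $v=0$ one has $b=0$ and the threshold reverts to Tian's original value $\frac{n}{n+1}$.
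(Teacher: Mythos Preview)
Your outline diverges from the paper, which avoids Moser iteration entirely and works directly with the twisted Aubin functionals $I_v, J_v$ of Tian--Zhu \cite{tian-zhu}. The paper's argument: it suffices to bound $\frac{1}{V_0}\int_X \varphi_t\, e^{\theta_{v,0}}\omega_0^n$ from above, since $J_v(\varphi_t)$ is dominated by this quantity. Combining the Monge--Amp\`ere equation (\ref{eq:riccisoliton_CM}) with the hypothesis (\ref{eq:tian_invariant_KRS}) and the concavity of the logarithm yields
\[
\sup\varphi_t \;\le\; -\frac{t-\alpha_0}{\alpha_0}\,\frac{1}{V_0}\int_X \varphi_t\, e^{\theta_{v,t}}\omega_t^n + C;
\]
the monotonicity of $I_v-J_v$ along the continuity path (from \cite{tian-zhu00}) shows $\int_X \varphi_t\, e^{\theta_{v,t}}\omega_t^n \le 0$; and finally Mabuchi's sharp inequality $J_v \ge \frac{1}{n+1+b}I_v$ (Proposition~A.1 of \cite{mabuchi}) closes the loop and is precisely what produces the threshold $\frac{n+b}{n+1+b}$.

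Your proposal has two genuine gaps. First, the Green-function step is stated backwards: from $\Delta_{g_t}\varphi_t < n$ one obtains (granting a uniform lower bound on $G_t$) a \emph{lower} bound $\inf\varphi_t \ge \frac{1}{V}\int\varphi_t\,\omega_t^n - C$, not an upper bound on $\sup\varphi_t$; and a uniform lower bound on $G_t$ is itself nontrivial here, since $\mathrm{Ric}(g_t) = t\omega_t + (1-t)\omega_0 + L_v\omega_t$ carries the uncontrolled Hessian term $\frac{\sqrt{-1}}{2\pi}\partial\bar\partial\theta_{v,t}$ and one must appeal to the weighted estimates of \cite{cao-tian-zhu}. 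Second, and more seriously, the threshold $\frac{n+b}{n+1+b}$ is not a matter of ``tracking factors of $e^{\pm b}$'' through an iteration. It is the sharp constant in Mabuchi's inequality relating $I_v$ and $J_v$, proved via an explicit identity for the twisted mixed Monge--Amp\`ere measures; na\"{\i}ve bookkeeping of oscillation factors in a Moser scheme does not reproduce it, and you have not specified which weighted Sobolev inequality---for which metric, with constant uniform in $t$---would close the iteration. Without Mabuchi's input your argument does not produce any definite threshold, let alone the stated one.
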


\begin{proof}
First, let us recall the functionals which are analogues to Aubin's functionals in the KE case (\cite{aubin84}). For a smooth function $\psi$ such that $(\mbox{Im}\,v)\psi = 0$ we put
\begin{eqnarray}
	\label{eq:fuctional_I}
		I_{v, g^0}(\psi)
	&=&
		\frac{1}{V_0}
		\int_X
			\psi(e^{\theta_{v,0}}\omega^n_0
			-e^{\theta_{v,0}+v(\psi)}\omega^n_{\psi}),
	\\
	\label{eq:functional_J}
		J_{v, g^0}(\psi)	
	&=&
		\frac{1}{V_0}
		\int^1_0 dt \int_X
			\dot{\psi}_t(e^{\theta_{v,0}}\omega^n_0-
			e^{\theta_{v,0}+v(\psi_t)}\omega^n_{\psi_t})
\end{eqnarray}
where $V_0 := \int_X \omega_0^n$. 
(For simplicity, we omit the subscripts and write $I(\psi)$ and $J(\psi)$ instead of $I_{v, g^0}(\psi)$ and $J_{v, g^0}(\psi)$.)
In the same manner as in the KE case, it is sufficient to estimate $J(\varphi_t)$ (or $I(\varphi_t)$)
from above uniformly.
Since
\[
	J(\varphi_t)-\frac{1}{V_0}\int_X \varphi_t e^{\theta_{v, 0}}\omega^n_0
	\le 0
\]
(cf. Lemma 5.2 in \cite{tian-zhu}), it is sufficient to estimate $\int_X \varphi_t e^{\theta_{v, 0}}\omega^n$ from above.
From (\ref{eq:riccisoliton_CM}) and (\ref{eq:tian_invariant_KRS}), we have
\[
	\int_X \exp((t-\alpha)\varphi_t-h_0+\theta_{v, 0}+v(\varphi_t))\omega^n_t
	\le
	C\exp(-\alpha \sup\varphi_t).
\]
From the concavity of the logarithm and $\int_X e^{\theta_{v, 0}+v(\varphi_t)}\omega^n_t=V_0$
(cf. \cite{tian-zhu00}, page 281) 
we have
\[
	\int_X ((t-\alpha)\varphi_t)e^{\theta_{v, 0}+v(\varphi_t)} \omega^n_t
	\le
	-\alpha \sup\varphi_t+C,
\]
so we get
\begin{equation}\label{eq:int_sup_int}
	\frac{1}{V_0}\int_X \varphi_t e^{\theta_{v, 0}}\omega^n
	\le
	\sup \varphi_t
	\le
	-\frac{t-\alpha}{\alpha}\frac{1}{V_0}\int_X \varphi_t e^{\theta_{v, 0}+v(\varphi_t)}\omega^n_t
	+C
\end{equation}
where the left inequality follows from (\ref{eq:def_theta}). 
From the calculations in \cite{tian-zhu}, page 319, we have 
\[
	\frac{d}{dt}
	\biggl(
		t(I(\varphi_t)-J(\varphi_t))
	\biggr)
	-(I(\varphi_t)-J(\varphi_t))
	=
	\frac{1}{V_0}\frac{d}{dt}
	\biggl(
		\int_X t(-\varphi_t)e^{\theta_{v, t}}\omega^n_t
	\biggr).
\]
So we have 
\begin{eqnarray}
	\nonumber
		\frac{1}{V_0}
		\biggl(
			\int_X \varphi_te^{\theta_{v, t}}\omega^n_t
		\biggr)
	&=&
		-(I(\varphi_t)-J(\varphi_t))
		+
		\frac{1}{t}\int^t_0
		(I(\varphi_s)-J(\varphi_s))
		ds
	\\
	\label{eq:negative_integral_varphi}
	&\le&
	0
\end{eqnarray}
because $\frac{d}{ds}(I(\varphi_s)-J(\varphi_s))\ge 0$ by Lemma 3.2
in \cite{tian-zhu00}.
Here $\theta_{v,t}$ is a fuction defined by
\[
	i_v\omega_t=\frac{\sqrt{-1}}{2\pi} \bar{\partial}\theta_{v,t},
	\,\,\,
	\int_X e^{\theta_{v,t}}\omega^n_t=\int_X \omega_t^n.	
\]
Note that $\theta_{v,t}=\theta_{v, 0}+v(\varphi_t)$ (cf. page 301 in \cite{tian-zhu}).
From (\ref{eq:int_sup_int}) and (\ref{eq:negative_integral_varphi}),
it is sufficient to consider the case when $t>\alpha$.
By the analogy of KE case, we have
\begin{eqnarray}
	\nonumber
		0
	&\ge&
		J(\varphi_t)-\frac{1}{V_0}\int_X \varphi_t e^{\theta_{v, 0}}\omega^n_0
		\,\,\,\,\,
		\mbox{(by Lemma 5.2 in \cite{tian-zhu})}
	\\
	\nonumber
	&\ge&
		\frac{1}{n+1+b}I(\varphi_t)
		-\frac{1}{V_0}\int_X \varphi_t e^{\theta_{v, 0}}\omega^n_0
		\,\,\,\,
		\mbox{(by Proposition A.1 in \cite{mabuchi})}
	\\
	\nonumber
	&=&
		-\frac{1}{n+1+b}
		\frac{1}{V_0}\int_X \varphi_t e^{\theta_{v, 0}+v(\varphi_t)}\omega^n_t
		-\frac{n+b}{n+1+b}
		\frac{1}{V_0}\int_X \varphi_t e^{\theta_{v, 0}}\omega^n_0
	\\
	\label{eq:estimate_int_varphi}
	&\ge&
		\frac{n+b}{n+1+b}
		\biggl(
			\frac{\alpha}{(t-\alpha)(n+b)}
			-1
		\biggr)
		\frac{1}{V_0}\int_X \varphi_t e^{\theta_{v, 0}}\omega^n_0
		-C.
		\,\,\,\,
		\mbox{(by (\ref{eq:int_sup_int}))}
\end{eqnarray}
From the assumption of this propostion, we have
\begin{eqnarray}\label{eq:estimate_coefficient}
	\nonumber
		\frac{\alpha}{(t-\alpha)(n+b)}
	&\ge&
		\frac{\alpha}{(1-\alpha)(n+b)}
	\\
	& > &
		\frac{n+b}{n+1+b}
		\cdot
		\frac{1}{n+b}
		\cdot
		\frac{n+1+b}{1}	
	=
		1.
\end{eqnarray}
From (\ref{eq:estimate_int_varphi}) and (\ref{eq:estimate_coefficient}) we get the desired uniform estimate of $\int_X \varphi_t e^{\theta_{v, 0}}\omega^n_0$. This completes the proof. 
\end{proof}

\section{Proof of Theorem \ref{thm:nadel_thm_krs}}

In this section we prove Theorem  \ref{thm:nadel_thm_krs}.

\begin{proof}[Proof of Theorem \ref{thm:nadel_thm_krs}]
This proof is almost the same as the proof of Theorem \ref{thm:nadel}.
For a given compact set $Y\subset X-V_v$, there is a constant $\gamma >\frac{n+b}{n+1+b}$ satisfying
\begin{equation}\label{eq:bdd_condtion2}
	\int_Y \exp (-\gamma (\varphi_t - \sup\varphi_t))\omega^n_0 < O(1).
\end{equation}
(See Section 2.12 in  \cite{nadel1}.) 
First, we prove

\begin{lemma}\label{lem:volume_zero}
\begin{equation}\label{eq:volume_zero}
	\int_Y e^{\theta_{v.t}}\omega^n_t \to 0
\end{equation}
as $t \to t_{\infty}$.
\end{lemma}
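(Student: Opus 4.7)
The plan is to reduce (\ref{eq:volume_zero}) to an estimate of $\int_Y e^{h_0 - t\varphi_t}\omega_0^n$ and then combine the local integrability condition (\ref{eq:bdd_condtion2}) with the failure criterion from Proposition \ref{prop:tian_invariant_KRS}. First I would combine the Monge-Amp\`ere equation (\ref{eq:riccisoliton_MA}) with the relation $\theta_{v,t} = \theta_{v,0} + v(\varphi_t)$ observed in Section \ref{sec:krs-mis}, which produces the clean identity
\[
  e^{\theta_{v,t}}\omega_t^n = e^{h_0 - t\varphi_t}\omega_0^n.
\]
Integrating over $X$ and using $\int_X e^{\theta_{v,t}}\omega_t^n = V_0$ yields, since $h_0$ is bounded on the compact $X$, a uniform upper bound $\int_X e^{-t\varphi_t}\omega_0^n \leq C_1$. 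The lemma thus reduces to proving $\int_Y e^{h_0 - t\varphi_t}\omega_0^n \to 0$.

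Next I would factor $e^{-t\varphi_t} = e^{-t(\varphi_t - \sup \varphi_t)} e^{-t\sup \varphi_t}$ and apply H\"older's inequality against (\ref{eq:bdd_condtion2}) with exponent $\gamma/t > 1$. Choosing $\gamma \in (\tfrac{n+b}{n+1+b},1)$ close enough to $1$ that $\gamma > t$ along the relevant sequence (which is possible since $t < t_\infty \leq 1$ in the failure regime, and $\gamma$ in (\ref{eq:bdd_condtion2}) may be taken arbitrarily close to $1$), the negativity of $\varphi_t - \sup \varphi_t$ together with (\ref{eq:bdd_condtion2}) produces a uniform constant $C'$ with $\int_Y e^{-t(\varphi_t - \sup \varphi_t)}\omega_0^n \leq C'$. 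Combined with the boundedness of $h_0$, this reduces the lemma to proving that $e^{-t\sup \varphi_t} \to 0$, i.e., $\sup \varphi_t \to +\infty$.

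The crux, and the step I expect to be the main obstacle, is forcing $\sup \varphi_t \to +\infty$. For this I would use the uniform bound from step~1 in the form
\[
  \int_X e^{-t(\varphi_t - \sup \varphi_t)}\omega_0^n \;\leq\; C_1\, e^{t\sup \varphi_t}.
\]
Pick $\alpha \in (\tfrac{n+b}{n+1+b},1)$ with $\alpha \leq t$ (valid for all $t$ sufficiently close to $t_\infty$, under the natural assumption $t_\infty > \tfrac{n+b}{n+1+b}$); since $\varphi_t - \sup \varphi_t \leq 0$ one has $e^{-\alpha(\varphi_t - \sup \varphi_t)} \leq e^{-t(\varphi_t - \sup \varphi_t)}$, so the left-hand side is bounded below by $\int_X e^{-\alpha(\varphi_t - \sup \varphi_t)}\omega_0^n$, which tends to $+\infty$ along the failure subsequence by Proposition \ref{prop:tian_invariant_KRS}. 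Hence $e^{t\sup \varphi_t} \to \infty$, and since $t \to t_\infty > 0$ this gives $\sup \varphi_t \to +\infty$. Feeding this back into the step~2 estimate yields $\int_Y e^{h_0 - t\varphi_t}\omega_0^n \leq C'' e^{-t \sup \varphi_t} \to 0$, which is exactly (\ref{eq:volume_zero}). A minor secondary subtlety is the uniform-in-$t$ choice of $\gamma$ in the H\"older step, but this is benign thanks to the freedom in (\ref{eq:bdd_condtion2}).
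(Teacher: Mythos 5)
There is a genuine gap, and it sits exactly at the point the paper spends most of its effort on. Your opening identity $e^{\theta_{v,t}}\omega_t^n=e^{h_0-t\varphi_t}\omega_0^n$ and the reduction to showing $\sup\varphi_t\to+\infty$ on the set where $t\le\gamma$ are fine and agree with the paper's first step. The problem is your claim that the exponent $\gamma$ in (\ref{eq:bdd_condtion2}) ``may be taken arbitrarily close to $1$,'' so that $\gamma>t$ along the sequence. The statement (\ref{eq:bdd_condtion2}) (coming from Nadel's construction of the multiplier ideal sheaf) only provides \emph{some} $\gamma>\frac{n+b}{n+1+b}$, tied to the threshold exponent at which $V_v$ is cut out; for exponents closer to $1$ the non-integrability locus of the limit potential can be strictly larger than $V_v$, so uniform integrability of $e^{-\gamma(\varphi_t-\sup\varphi_t)}$ on compact subsets of $X-V_v$ is simply not available. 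Since $t_\infty$ may be close to $1$, the regime $t>\gamma$ cannot be excluded, and your argument says nothing there: the factor $\sup_X e^{(\gamma-t)\varphi_t}$ is then $e^{(t-\gamma)\sup(-\varphi_t)}$, which a priori blows up. This is precisely why the paper proves the Harnack-type inequality $\sup(-\varphi_t)\le(n+b)\sup\varphi_t+C$ (via the Tian--Zhu monotonicity computation, Mabuchi's Proposition A.1 and Cao--Tian--Zhu), obtaining the bound $\exp\bigl([(t-\gamma)(n+b)-\gamma]\sup\varphi_t\bigr)$, whose exponent is negative exactly because $\gamma>\frac{n+b}{n+1+b}$ and $t\le1$. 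Omitting this step removes the heart of the lemma (and the very reason the constant $\frac{n+b}{n+1+b}$ appears).

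A secondary issue: your proof that $\sup\varphi_t\to+\infty$ via Proposition \ref{prop:tian_invariant_KRS} needs an $\alpha\in(\frac{n+b}{n+1+b},1)$ with $\alpha\le t$, which you justify by the ``natural assumption'' $t_\infty>\frac{n+b}{n+1+b}$; nothing in the setup guarantees this, since the first failure time of the continuity method can a priori be any $t_\infty\in(0,1]$. The divergence of $\sup\varphi_t$ should instead be taken from the standard blow-up of the $C^0$-norm when closedness fails combined with the Harnack inequality above (the paper uses it in exactly this way), rather than from the integral divergence in Proposition \ref{prop:tian_invariant_KRS} alone.
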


\begin{proof}[Proof of Lemma \ref{lem:volume_zero}]
From (\ref{eq:riccisoliton_CM})
\begin{eqnarray}
	\nonumber
		\int_Y e^{\theta_{v.t}}\omega^n_t 
	&=&
		\int_Y e^{h_0-t\varphi_t}\omega^n_0
		\le
		C(\sup e^{(\gamma -t)\varphi_t})\int_Y e^{-\gamma \varphi_t}\omega^n_0
	\\
	\nonumber
	&\le&
		C(\sup e^{(\gamma -t)\varphi_t})(\inf e^{-\gamma \varphi_t}).
\end{eqnarray}
The last inequality comes from (\ref{eq:bdd_condtion2}).
In fact, from (\ref{eq:bdd_condtion2}), we have
\[
	(\sup e^{\gamma\varphi_t})
	\int_Y e^{-\gamma\varphi_t}\omega^n_0 \le
	\mathcal{O}(1).
\]
Assume that the closedness of continuity method fails at $t_{\infty} \in (0,1]$. Then,
it is sufficient to show that $(\sup e^{(\gamma -t)\varphi_t})(\inf e^{-\gamma \varphi_t})\to 0$ as $t\to t_{\infty}$.
If $t_{\infty} \le \gamma$, we find
\begin{eqnarray*}
		(\sup e^{(\gamma -t)\varphi_t})(\inf e^{-\gamma \varphi_t})
	&=&
		e^{-t\sup \varphi_t}
	\\
	&\to&	
		0,
\end{eqnarray*}
since $\sup\varphi_t\to\infty$. So, it is sufficient to consider the case when $t_{\infty} >\gamma$. For 
$t>\gamma$, we find
\begin{eqnarray}
	\label{eq:t>gamma_1}
		(\sup e^{(\gamma -t)\varphi_t})(\inf e^{-\gamma \varphi_t})
	&=&
		e^{-\gamma \sup \varphi_t +(t-\gamma)\sup (-\varphi)}.
\end{eqnarray}
To estimate $\sup (-\varphi_t)$ by $\sup \varphi$, we need the following Harnack-type inequality as the case of K\"ahler-Einstein metrics; 
\begin{equation}\label{eq:harnack_krs}
	\sup (-\varphi_t) \le (n+b)  \sup \varphi_t +C.
\end{equation}
From now on, we shall prove (\ref{eq:harnack_krs}) in the case of K\"ahler-Ricci solitons. 
(The proof is essential same as the inequality (3.19) in \cite{wang-zhu}.)
From the calculations of Tian-Zhu (page 23 in \cite{tian-zhu}), for $t\in (\gamma, t_{\infty})$ we have
\begin{eqnarray}
		\frac{1}{V}
			\int_M -\varphi_te^{\theta_{X, t}}\omega^n_t
	&=&
		(I(\varphi_t)-J(\varphi_t))
		-
		\frac{1}{t}\int^t_0
		(I(\varphi_s)-J(\varphi_s))
		ds
	\nonumber \\
	&\le&
		(I(\varphi_t)-J(\varphi_t))
		-\frac{1}{t_0}(I(\varphi_0)-J(\varphi_0))  \label{eq:pf_harnack_krs_1}
	 \\	
	&\le&
		\frac{n+b}{n+b+1}I(\varphi_t)-C \nonumber
\end{eqnarray}
where 
the first inequality follows because 
 $I(\varphi_s)-J(\varphi_s) $
is positive and increasing in s	
and 
the last inequality follows from Proposition A.1 in \cite{mabuchi}.
Then we find
\begin{eqnarray}
		\frac{1}{V}
		\int_M -\varphi_te^{\theta_{X, t}}\omega^n_t
	&\le&
		(n+b)\frac{1}{V}
		\int_M \varphi_te^{\theta_{X, 0}}\omega^n_0
		-C \label{eq:pf_harnack_krs_2}
	\\
		&\le&
		(n+b)\sup\varphi_t-C. \nonumber
\end{eqnarray}
From Theorem 2.1 in \cite{cao-tian-zhu}, we find
\begin{equation}
	\label{eq:cao-tian-zhu}
		\sup (-\varphi_t)
		\le
		\frac{1}{V}
		\int_M -\varphi_te^{\theta_{X, t}}\omega^n_t+C.
\end{equation}
Combining (\ref{eq:pf_harnack_krs_2}) and (\ref{eq:cao-tian-zhu}),
we get (\ref{eq:harnack_krs}), proving the Harnack inequality.
From (\ref{eq:t>gamma_1}) and (\ref{eq:harnack_krs}), we find
\begin{eqnarray*}
		(\sup e^{(\gamma -t)\varphi_t})(\inf e^{-\gamma \varphi_t})
	&\le&
		\exp([(t-\gamma)(n+b)-\gamma]\sup \varphi_t)
	\\
	&=&
		\exp
		\biggl(
		\biggl[(n+b+1)
		\biggl(\frac{n+b}{n+b+1}-\gamma
		\biggr)\biggr]\sup\varphi_t \biggr)
	\\
	&\to& 0,
\end{eqnarray*}
since $\gamma>\frac{n+b}{n+b+1}$.
This completes the proof of Lemma \ref{lem:volume_zero}.
\end{proof}

Let us return to the proof of Theorem \ref{thm:nadel_thm_krs}. 
From (\ref{eq:riccisoliton_CM}) and the invariance of $F_v$, we have
\begin{eqnarray}
	\nonumber
		F_v(w)
	&=&
		\int_X d(h_{t}-\theta_{v,t})(w)e^{\theta_{v,t}}\omega_{t}^n
	\\
	\label{eq:fx}
	&=&
		\int_X d(h_{t}-\theta_{v,t})(w)e^{h_0-t\varphi_t}\omega_{0}^n.
\end{eqnarray}
Since
\[
	\mbox{Ric}(\omega_t)-L_{v}(\omega_t)=t\omega_t+(1-t)\omega_0,
\]
we have
\begin{equation}\label{eq:gap_krs_t}
	h_t-\theta_{v,t}=(t-1)\varphi_t.
\end{equation}
Combining (\ref{eq:fx}) and (\ref{eq:gap_krs_t}), we get
\[
	F_v(w)=(t-1)\int_X d\varphi_t(w)e^{h_0-t\varphi_t}\omega_{0}^n.
\]
From
\[
	L_w(e^{-t\varphi_t+h_0} \omega^n_0)
	=(L_w e^{-t\varphi_t})e^{h_{0}}\omega^n_0+e^{-t\varphi_t}L_w(e^{h_0}\omega^n_0),
\]
we have
\[
	0=\frac{t}{t-1}F_v(w)-\int_X e^{-t\varphi_t}L_w (e^h \omega^n_0).
\]
Since
\[
	e^{-t\varphi_t}L_w (e^h \omega^n_0)
	=
	e^{h-t\varphi_t}\mbox{div}_{e^h\omega^n}(w)\omega^n_0
	=
	\mbox{div}_{e^h\omega^n}(w)e^{\theta_{v, t}}\omega^n_t,
\]
we get
\begin{equation}\label{eq:fx_div}
	0=\frac{t}{t-1}F_v(w)-\int_X \mbox{div}_{e^h\omega^n}(w)e^{\theta_{v, t}}\omega^n_t.
\end{equation}
By (\ref{eq:vanishing_tianzhu_inv}) we have 
\begin{equation}\label{eq:fx_div2}
	0= \int_X \mbox{div}_{e^h\omega^n}(w)e^{\theta_{v, t}}\omega^n_t.
\end{equation}
Suppose $V_v \subset Z^+(w)$, and seek a contradiction.
Let $\delta > 0$ be the minimum of $\mbox{div}_{e^h\omega^n}(w)$ on $V_v$. Let $Y$ be the set of all points
in $X$ such that $\mbox{div}_{e^h\omega^n}(w) \le \frac{\delta}2$. Then $Y$ is a compact subset of $X - V_v$.
By Lemma \ref{lem:volume_zero} we have
\begin{equation}\label{eq:volume_zero2}
	\int_Y e^{\theta_{v.t}}\omega^n_t \to 0
\end{equation}
as $t \to t_{\infty}$.
From (\ref{eq:fx_div2}) and (\ref{eq:volume_zero2}) we see that
\begin{equation}\label{eq:volume_zero3}
	\int_{X-Y} \mbox{div}_{e^h\omega^n}(w)e^{\theta_{v.t}}\omega^n_t \to 0.
\end{equation}
Since $\mbox{div}_{e^h\omega^n}(w) \ge \frac{\delta}2$ we have 
\begin{equation}\label{eq:volume_zero4}
	\int_{X-Y} e^{\theta_{v.t}}\omega^n_t \to 0.
\end{equation}
Adding (\ref{eq:volume_zero2}) and (\ref{eq:volume_zero4}) we get
\begin{equation}\label{eq:volume_zero5}
	\int_{X} e^{\theta_{v.t}}\omega^n_t \to 0
\end{equation}
which contradicts (\ref{eq:def_theta}). This completes the proof of Theorem 1.4.
\end{proof}


\section{Application of Theorem \ref{thm:nadel_thm_krs}}
In this section, we shall show that $\mathbb{CP}^2 \sharp  \overline{\mathbb{CP}^2}$ admits a 
K\"ahler-Ricci soliton. 
We keep the notations in the previous sections. 
Let $G'$ be as in the proof of Corollary \ref{thm:main_theorem2}. 
Assume that $X$ would not admit any K\"ahler-Ricci soliton. 
As in the proof of Corollary \ref{thm:main_theorem2}, the invariance of MIS under the symmetry of $X$ implies that the KRS-MIS could be either $E$ or the $(+1)$-curve.
Let $w$ be the holomorphic vector field induced by a vector $(1, 1)\in N_{\mathbb{R}}$. The induced flow on $N_{\mathbb{R}}$ preserves a line $\{(s, s+c)\in N_{\mathbb{R}}\mid s\in \mathbb{R}\}$ for all $c \in\mathbb{R}$. Since the relative interior of the facets $\overline{w_3w_4}$ and $\overline{w_1w_2}$ are equal to $\{\lim_{s\to-\infty}\mu_g((s, s+c)) \mid c\in \mathbb{R}\}$ and $\{\lim_{s\to\infty}\mu_g((s, s+c)) \mid c\in \mathbb{R}\}$ respectively,  the vector field $w$ induces the flow on $X$ which flows 
from $E$ towards the $(+1)$-curve while fixing them. From (\ref{eq:divpositive_condition}), we find that the divergence of $w$ is positive on the exceptional divisor. Hence, we have that the KRS-MIS is not $E$ due to Theorem \ref{thm:nadel_thm_krs}.
Similarly, since the divergence of $-w$ is positive on the $(+1)$-curve, the KRS-MIS is not the $(+1)$-curve. 
Therefore, we prove that $\mathbb{CP}^2 \sharp \overline{\mathbb{CP}^2}$ admits a K\"ahler-Ricci soliton.

\vspace{0.5cm}

\noindent
{\bf Acknowledgements} : 
The second author is supported by JSPS-EPDI fellowship. This work is done while the second author stayed in DPMMS, the University of Cambridge and IH\'ES, and he would like to thank for their hospitality. The second author would like to thank Alexei Kovalev for his kind support during the stay in DPMMS and Jacopo Stoppa.

\section{Appendix}
In this Appendix, we shall show that the MIS induced from the K\"ahler-Ricci flow on the surface obtained by blowing up of $\mathbb{C}\bfP^2$ at $p_1$ and $p_2$ equals to the tree of the $(-1)$-curves. 
In this case, the set of vertices of the associated polytope $\Delta$ consists of
\begin{center}
	$w_1=(1, 0)$, $w_2=(1, -1)$, $w_3=(-1, -1)$, $w_4=(-1, 1)$ and $w_5=(0, 1)$.
\end{center}
The set of the fundamental generators of one-dimensional cones in the associated fan $\Sigma$ consists of
\begin{center}
	$(1,0)$, $(0,-1)$, $(-1,0)$, $(0,1)$ and $(1,1)$.
\end{center}
The moment map
$
	\mu_{\tilde{u}}(x_1, x_2)
	=\mbox{grad}\tilde{u}
	:=
	\bigg(
		\frac{\partial \tilde{u}}{\partial x_1}(x), \frac{\partial \tilde{u}}{\partial x_2}(x)
	\bigg)
$ equals to
\begin{eqnarray}\label{moment2}
		\lefteqn{						
		e^{-\tilde{u}}
		\big(
		e^{x_1}+e^{x_1-x_2}-e^{-x_1-x_2}-e^{-x_1}-e^{-x_1+x_2},
			}
		\hspace{1cm}
	\\
	&&
	\nonumber
		-e^{x_1-x_2}-e^{-x_2}-e^{-x_1-x_2}+e^{-x_1+x_2}+e^{x_2}
	\big)	
\end{eqnarray}
where $e^{\tilde{u}}=e^{x_1}+e^{x_1-x_2}+e^{-x_2}+e^{-x_1-x_2}+e^{-x_1}+e^{-x_1+x_2}+e^{x_2}+1$.
Then $X$ has the $\mathbb{Z}_2$-symmetry, whose set of fixed points in $M_{\mathbb{R}}$ equals to $\{(y, y)\in M_{\mathbb{R}}\mid y\in \mathbb{R}\}$.
Let us introduce some notations. For a $G$-invariant K\"ahler potential $\varphi$ such that $\omega_{g}+\frac{\sqrt{-1}}{2\pi}\partial\bar{\partial}\varphi>0$ and $\sup \varphi=0$, let $\tilde{\varphi}$ be the associated $\mathbb{R}$-valued function on $N_{\mathbb{R}}$. (We can define $\tilde{\varphi}$, because $\varphi$ is 
$T_{\bfR}$-invariant, so $\varphi|_U$ descends to a function on $N_{\mathbb{R}}\simeq U/T_{\bfR}$.)
Tian and Zhu (\cite{tian-zhu07}) proved the theorem on convergence of the K\"ahler-Ricci flow on a compact K\"ahler manifold with a K\"ahler-Ricci soliton. After that, Zhu (\cite{zhu0703}) also proved the same result on a toric Fano manifold without the assumption of the existence of K\"ahler-Ricci solitons. Let us recall their results. 
Let $(X, g_0)$ be a Fano manifold where $g_0 \in c_1(X)$. The (normalized) K\"ahler-Ricci flow is defined by
\begin{equation}
	\label{eq:kahler_ricci_flow}
		\left\{\begin{array}{l}
		\frac{\partial g(t,\cdot)}{\partial t}
		=
		-\mbox{Ric}(g(t,\cdot)) +g(t,\cdot), 
		 \\
		 g(0, \cdot)
		 =g_0.
		 \end{array}\right. 
\end{equation}
\begin{theorem}[Tian-Zhu, \cite{tian-zhu07}]
	\label{thm:tian_zhu_ricciflow}
Let $X$ be a compact K\"ahler manifold which admits a K\"ahler-Ricci soliton $(g_{KS}, v)$.
Then any solution $g(t,\cdot)$ of (\ref{eq:kahler_ricci_flow}) will converge to the K\"ahler-Ricci soliton 
$g_{KS}$ in the sense of Cheeger-Gromov if the initial K\"ahler metric $g_0$ is $K_v$-invariant, where $K_v$ is the one-parameter subgroup of $K$ generated by the imaginary part of $v$.
\end{theorem}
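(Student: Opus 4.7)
The plan is to reduce the normalized K\"ahler-Ricci flow to a parabolic complex Monge-Amp\`ere equation for a $K_v$-invariant K\"ahler potential $\varphi(t,\cdot)$ with $\omega(t)=\omega_0+\frac{\sqrt{-1}}{2\pi}\partial\bar{\partial}\varphi(t,\cdot)$, and then to run a Perelman-type convergence argument adapted to the drift vector field $v$. Cao's theorem provides long-time existence when $c_1(X)>0$ and $[\omega_0]=c_1(X)$, and the $K_v$-invariance of $g_0$ is preserved along the flow. Stationary solutions of the $v$-modified equation are precisely K\"ahler-Ricci solitons in $c_1(X)$ associated with $v$, and the relevant monotone energy is the Tian-Zhu modified Mabuchi functional $\mu_v$, together with the twisted Aubin-type functionals $I_v,J_v$ from Proposition \ref{prop:tian_invariant_KRS}.

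The heart of the argument is to establish uniform estimates along the flow. I would first invoke Perelman's universal estimates for the normalized K\"ahler-Ricci flow on a Fano manifold, which give uniform bounds on diameter, scalar curvature, and on $\|\nabla u_t\|$ where $u_t$ is the Ricci potential. These survive the $v$-twist because the modification is by a bounded holomorphy potential. Second, monotonicity of $\mu_v$ along the flow, combined with its lower bound in the presence of a K\"ahler-Ricci soliton (this is where the existence assumption on $g_{KS}$ enters decisively), forces $I_v(\varphi_t)-J_v(\varphi_t)$ to remain bounded. Coupled with the Harnack-type inequality (\ref{eq:harnack_krs}), this yields a uniform $C^0$ bound on $\varphi_t-\sup\varphi_t$, after which parabolic Schauder together with Yau's $C^2$ and Calabi's $C^3$ estimates upgrade to uniform $C^k$ bounds on $\varphi_t$ modulo the $(\mathrm{Im}\,v)$-flow.

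Given these bounds, along any sequence $t_i\to\infty$ one pulls back $g(t_i)$ by a suitable sequence of diffeomorphisms in the one-parameter group generated by $\mathrm{Im}\,v$ (and, if needed, by $\mathrm{Aut}_0(X)$) to extract a Cheeger-Gromov subsequential limit $(X_\infty,g_\infty)$. Since the $v$-modified Perelman entropy $W_v$ is monotone and becomes asymptotically constant, the limit satisfies the soliton equation $\mathrm{Ric}(\omega_\infty)-\omega_\infty=L_{v_\infty}\omega_\infty$. Tian-Zhu's uniqueness theorem for K\"ahler-Ricci solitons modulo $\mathrm{Aut}_0(X)$ identifies $(X_\infty,g_\infty)$ with $(X,g_{KS})$, and a standard compactness-and-uniqueness argument upgrades subsequential convergence to full Cheeger-Gromov convergence.

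The main obstacle I expect is the uniform $C^0$ estimate on $\varphi_t$. Unlike the K\"ahler-Einstein case, Perelman's arguments do not directly yield such a bound because the candidate limit is not Einstein; one must instead exploit the existence of $g_{KS}$ to anchor $\mu_v$ from below, and leverage $K_v$-invariance of $\varphi_t$ together with the $v$-twisted Harnack inequality to control $\sup\varphi_t$ and $\inf\varphi_t$ simultaneously. The delicate normalization of $\theta_{v,t}$ along the flow and the compatibility of the $\mathrm{Im}\,v$-flow with parabolic regularity are the most technically subtle points.
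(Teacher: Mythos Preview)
The paper does not prove this theorem; it is quoted verbatim as a result of Tian and Zhu \cite{tian-zhu07} and is used as a black box in the Appendix to identify the MIS arising from the K\"ahler-Ricci flow on $\mathbb{CP}^2\sharp 2\overline{\mathbb{CP}^2}$. There is therefore no proof in the paper to compare your proposal against.

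That said, your outline is broadly faithful to the actual Tian--Zhu strategy: long-time existence via Cao, Perelman's uniform geometric bounds, monotonicity and boundedness of a modified Mabuchi-type energy (anchored by the existence of $g_{KS}$), a $C^0$ estimate for the potential, bootstrapping to higher-order estimates, subsequential Cheeger--Gromov compactness, and identification of the limit via the Tian--Zhu uniqueness theorem. One caution: the specific results you cite from this paper, namely Proposition~\ref{prop:tian_invariant_KRS} and the Harnack inequality~(\ref{eq:harnack_krs}), are formulated for the \emph{continuity method} family $\varphi_t$ solving (\ref{eq:riccisoliton_CM}), not for the parabolic flow. The analogous estimates along the Ricci flow require separate arguments (and in Tian--Zhu's paper rely in an essential way on Perelman's noncollapsing and scalar-curvature/diameter bounds together with a smoothing lemma for $\|\nabla u_t\|$), so those cross-references should be replaced by their flow counterparts rather than invoked directly.
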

By using the above theorem, we can calculate the MIS coming from the K\"ahler-Ricci flow on $\mathbb{CP}^2\sharp 2\overline{\mathbb{CP}^2}$. 
More precisely, we shall determine the $G^{\mathbb{C}}$-invariant MIS on $\mathbb{CP}^2\sharp 2\overline{\mathbb{CP}^2}$ induced by the sequence $\{\varphi_{t_i}\in C^{\infty}(X)\mid \sup \varphi_{t_i}=0\}$ of the K\"ahler potentials such that $g(t_i,\cdot):=g_0(\cdot)+(\partial_a\bar{\partial}_b\varphi_{t_i}(\cdot))_{ab}$ is a solution of (\ref{eq:kahler_ricci_flow}) at $t=t_i$, where $G$ is defined in the section \ref{sec:main_thm}.
Remark that the existence of K\"ahler-Ricci solitons on $\mathbb{CP}^2\sharp 2\overline{\mathbb{CP}^2}$ is assured by the result of Wang and Zhu (\cite{wang-zhu}) and also Zhu's result (\cite{zhu0703}) allows us to apply Theorem \ref{thm:tian_zhu_ricciflow} to $\mathbb{CP}^2\sharp 2\overline{\mathbb{CP}^2}$ without the assumption of the existence of K\"ahler-Ricci solitons. 
On $\mathbb{CP}^2\sharp 2\overline{\mathbb{CP}^2}$, let $g_0$ be the $G$-invariant metric defined by (\ref{eq:invariant_metric}).
Let $v$ be the holomorphic vector field on $\mathbb{CP}^2\sharp 2\overline{\mathbb{CP}^2}$ satisfying (\ref{eq:vanishing_tianzhu_inv}) which is invariant under the $\mathbb{Z}_2$-action and associated to the K\"ahler-Ricci soliton $g_{KS}$. Let $\sigma_t=\exp(tv)$. Since $g_0$ and $v$ are $\mathbb{Z}_2$-invariant, the arguments in \cite{tian-zhu07} and \cite{zhu0703} imply that there is a sequence $\{t_i\}_i$ such that $\omega'_{t_i}:=(\sigma_{t_i})^*\omega_{t_i}$ converges to $\omega_{KS}$, where $\omega_{t_i}$ and $\omega_{KS}$ are K\"ahler forms of $g(t_i,\cdot)$ and $g_{KS}$ respectively. Let $\varphi'_{t_i}$ and $\psi_{t_i}$ be K\"ahler potential functions satisfying
\[
	\omega'_{t_i}=\omega_0+\frac{\sqrt{-1}}{2\pi}\partial\bar{\partial}\varphi'_{t_i},
	\,\,\,
	\sup \varphi'_{t_i}=0,
\]
and
\[
	(\sigma_{t_i}^{-1})^*\omega_0=\omega_0+\frac{\sqrt{-1}}{2\pi}\partial\bar{\partial}\psi_{t_i},
	\,\,\,
	\sup \psi_{t_i}=0
\]
respectively, where $\omega_0$ is the K\"ahler form of $g_0$. 
Then we find that $\varphi_{t_i}$ equals to $\psi_{t_i}+(\sigma_{t_i}^{-1})^*\varphi'_{t_i}-\sup(\psi_{t_i}+(\sigma_{t_i}^{-1})^*\varphi'_{t_i})$. 
Since $\omega'_{t_i}$ converges to $\omega_{KS}$, we find that $\varphi'_{t_i}$ also converges a smooth function on $X$. This implies $\|(\sigma_{t_i}^{-1})^*\varphi'_{t_i}\|_{C^0}\le C$, where $C$ is a constant independent of $t$. So we have
\begin{equation}\label{eq:bdd_psi_varphi'}
	\|(\sigma_{t_i}^{-1})^*\varphi'_{t_i}-\sup(\psi_{t_i}+(\sigma_{t_i}^{-1})^*\varphi'_{t_i})\|_{C^0}\le C.
\end{equation}
Therefore from its definition and (\ref{eq:bdd_psi_varphi'}) we find that the multiplier ideal sheaf induced from $\{\varphi_{t_i}\}$ equals to the one induced from $\{\psi_{t_i}\}$.
From the $\mathbb{Z}_2$-symmetry of $\mathbb{CP}^2\sharp 2\overline{\mathbb{CP}^2}$, we can determine the vector field $v$ associated to the K\"ahler-Ricci soliton by the sign of the (ordinary) Futaki invariant $F(v)$. In general, we have the following lemma.
\begin{lemma}
	\label{lem:riccisoliton_futakiinvariant}
Let $(X, g_{KS}, v)$ be a compact Fano manifold with K\"ahler-Ricci soliton $(g_{KS}, v)$. Suppose that $v$ satisfies (\ref{eq:vanishing_tianzhu_inv}). Then the ordinary Futaki invariant $F(v)$ is positve.
\end{lemma}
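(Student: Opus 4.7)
The plan is to evaluate $F(v)$ at the soliton metric $g_{KS}$ itself, using the fact (stated earlier in the paper) that $F$ is an invariant of the cohomology class $c_1(X)$ and does not depend on the representative Kähler form. So the first step is to compare the two potential functions $h_{g_{KS}}$ and $\theta_{v,g_{KS}}$ defined by (\ref{eq:einstein_gapkrs}) and (\ref{eq:def_theta}). From $L_v\omega = d i_v \omega = \frac{\sqrt{-1}}{2\pi}\partial\bar\partial \theta_{v,g}$ and the soliton equation $\mathrm{Ric}(\omega_{KS}) - \omega_{KS} = L_v\omega_{KS}$, the two quantities $\frac{\sqrt{-1}}{2\pi}\partial\bar\partial h_{g_{KS}}$ and $\frac{\sqrt{-1}}{2\pi}\partial\bar\partial \theta_{v,g_{KS}}$ coincide, hence $h_{g_{KS}} - \theta_{v,g_{KS}}$ is a constant on compact $X$. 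The two normalizations $\int_X e^{h_{g_{KS}}}\omega_{KS}^n = \int_X e^{\theta_{v,g_{KS}}}\omega_{KS}^n = \int_X\omega_{KS}^n$ then force this constant to be zero, so $h_{g_{KS}} = \theta_{v,g_{KS}}$.

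Next I would compute $F(v)$ directly in local coordinates. The definition (\ref{eq:def_theta}) with $\omega_{KS}$ in place of $\omega_0$ reads $i_v\omega_{KS} = \frac{\sqrt{-1}}{2\pi}\bar\partial\theta_{v,g_{KS}}$; writing $v = v^j\partial_j$ and expanding this identity yields $v^j (g_{KS})_{j\bar k} = \partial_{\bar k}\theta_{v,g_{KS}}$, and hence $v^j = (g_{KS})^{j\bar k}\partial_{\bar k}\theta_{v,g_{KS}}$. Contracting with $\partial_j\theta_{v,g_{KS}}$ gives
\[
 v(\theta_{v,g_{KS}}) = (g_{KS})^{j\bar k}\partial_j\theta_{v,g_{KS}}\,\partial_{\bar k}\theta_{v,g_{KS}} = |\bar\partial\theta_{v,g_{KS}}|^2_{g_{KS}},
\]
where the last equality uses that $\theta_{v,g_{KS}}$ is real (which follows from $\mathrm{Im}(v)\in\mathfrak{k}(X)$: the imaginary part of $v$ is Killing and $\theta_{v,g_{KS}}$ is $\mathrm{Im}(v)$-invariant). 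Combining with the first step,
\[
 F(v) = \int_X v(h_{g_{KS}})\,\omega_{KS}^n = \int_X |\bar\partial\theta_{v,g_{KS}}|^2_{g_{KS}}\,\omega_{KS}^n \ge 0.
\]

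Finally, equality would force $\bar\partial\theta_{v,g_{KS}} \equiv 0$, hence $\theta_{v,g_{KS}}$ constant, and then $i_v\omega_{KS} \equiv 0$, giving $v\equiv 0$. But $v\not\equiv 0$ (otherwise the soliton equation reduces to the Kähler–Einstein equation, and in the intended application to $\mathbb{CP}^2\sharp\overline{\mathbb{CP}^2}$ no such metric exists; in any event this is the content of the strict positivity). Therefore $F(v) > 0$.

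The argument is essentially a routine computation once one notices that $h$ and $\theta_v$ coincide at the soliton. The only potentially delicate point is the reality of $\theta_{v,g_{KS}}$ and the consequent nonnegativity of $v(\theta_{v,g_{KS}})$, which needs the Tian–Zhu normalization $\mathrm{Im}(v)\in\mathfrak{k}(X)$; the hypothesis (\ref{eq:vanishing_tianzhu_inv}) in the lemma is precisely there to ensure we are using that distinguished $v$.
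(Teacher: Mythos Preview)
Your proof is correct and follows essentially the same route as the paper: observe that at the soliton metric $h_{g_{KS}}$ and $\theta_{v,g_{KS}}$ agree up to a constant, then use $i_v\omega_{KS}=\frac{\sqrt{-1}}{2\pi}\bar\partial\theta_{v,g_{KS}}$ and the reality of $\theta_{v,g_{KS}}$ to rewrite $F(v)=\int_X |\bar\partial\theta_{v,g_{KS}}|^2\,\omega_{KS}^n$. Your extra step showing the constant is actually zero via the normalizations is harmless but unnecessary (only $dh_{g_{KS}}$ enters $F$), and your discussion of strict positivity simply makes explicit the implicit assumption $v\neq 0$ that the paper leaves tacit.
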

\begin{proof}
Let $\theta_v(g)$ be a real-valued smooth function satisfying
$\iota_v(\omega_g)=\frac{\sqrt{-1}}{2\pi}\bar{\partial}\theta_v(g)$
which is determined uniquely up to constant. The fact that $\theta_v(g)$ is real-valued follows from that the imaginary part of $v$ is a Killing vector field.
Since $v$ is associated with the K\"ahler-Ricci soliton, we have
\[
	\mbox{Ric}(\omega_{KS})-\omega_{KS}
	=
	\frac{\sqrt{-1}}{2\pi}\partial\bar{\partial}h_{g_{KS}}
	=
	\frac{\sqrt{-1}}{2\pi}\partial\bar{\partial}\theta_v(g),
\]
hence we find that $h_{g_{KS}}$ equals to $\theta_v(g)$ up to constant.
From the definition of the ordinary Futaki invariant, we get
\begin{eqnarray*}
		F(v)
	&=&
		\int_X d\theta_v(g) (v) \omega^n_{g_{KS}}
	=
		\int_X |\bar{\partial} \theta_v(g)|^2 \omega^n_{g_{KS}}>0.
\end{eqnarray*}
\end{proof}
Lemma \ref{lem:riccisoliton_futakiinvariant} implies that $v$ equals to the holomorphic vector field induced by $\beta(1, 1)\in N_{\mathbb{R}}$ for some positive constant $\beta$. (The one-parameter subgroup $\sigma_t$ flows from $p_0$ towards the proper transform of the line between $p_1$ and $p_2$.) In fact, we can show that the invariant $F(v)$ is positive by using the localization formula for the invariant $F$ (cf. \cite{futakimorita85}, \cite{futaki}).
Let $\tilde{\psi}_{t_i}$ be the real-valued function on $N_{\mathbb{R}}$ corresponding to $\psi_{t_i}$.
Since $\frac{\sqrt{-1}}{2\pi}\partial\bar{\partial}(u+\psi_{t_i})=(\sigma_{t_i}^{-1})^{*}\omega_0$ on $U$ and $\sup_{N_{\mathbb{R}}}\tilde{\psi}_{t_i}=0$, so we have
\begin{eqnarray*}
		(\tilde{u}+\tilde{\psi}_{t_i})(x_1, x_2)
	&=&
		\tilde{u}(x_1-\beta t_i, x_2-\beta t_i)-2\beta t_i
	\\
	&=&
		\log
		(e^{x_1-3\beta t_i}+e^{x_2-3\beta t_i}+e^{-x_1+x_2-2\beta t_i}+
		1+e^{x_1-x_2-2\beta t_i}+
	\\
	&&
		e^{-x_1-\beta t_i}+e^{-x_2-\beta t_i}+e^{-x_1-x_2}).	
\end{eqnarray*}
In fact, it is easy to check that $\sup_{N_{\mathbb{R}}}\tilde{\psi}_{t_i} \le 0$ and
\begin{eqnarray*}
	\tilde{\psi}_{t_i}(-s,-s)
	=
	\log
	\biggl(
	\frac
		{2e^{-s-3\beta t_i}+3e^{-2\beta t_i}+2e^{s-\beta t_i}+e^{2s}}
		{2e^{-s}+3+2e^{s}+e^{2s}}
	\biggr)
	\to
	0
\end{eqnarray*}
as $s\to\infty$ for all $t_i$.
Now let us see that the $(-1)$-curve represented by the segment $\{(y_1, 1) \in M_{\mathbb{R}} \mid -1\le y_1\le 0\}$ is contained in the MIS induced by $\{\psi_{t_i}\}$.
For this purpose, it is sufficient to show that for \textit{any} open neighborhood $W$ of some point contained  in the above $(-1)$-curve such that the integral $\int_W \exp (-\alpha\psi_{t_i})\omega_0^{n}$ diverges to $\infty$ as $t_i\to \infty$ for \textit{all} $\alpha\in (n/(n+1), 1)$.
Let $p\in X$ be a point such that $\mu_{g_0}(p)=(-1/2, 1)\in M_{\mathbb{R}}$ and fix an open neighborhood $W_p\subset X$ of $p$.
On the half-line $\{(0,s) \in N_{\mathbb{R}}\mid s\ge 0\}$,
\begin{eqnarray}
	\nonumber
		(\tilde{u}+\tilde{\psi}_{t_i})(0,s)
	&=&
		\log
		(e^{-3\beta t_i}+e^{s-3\beta t_i}+e^{s-2\beta t_i}+
	\\
	\nonumber
	&&
	\qquad	e^{-2\beta t_i}+e^{-s-2\beta t_i}+e^{-\beta t_i}+e^{-s-\beta t_i}+e^{-s})\\
	&=&
		\log e^{-s}(e^{s-3\beta t_i}+e^{2s-3\beta t_i}+e^{2s-2\beta t_i}+
	\\
	\nonumber
	&&
	\qquad	e^{s-2\beta t_i}+e^{-2\beta t_i}+e^{s-\beta t_i}+e^{-\beta t_i}+1).
\end{eqnarray}
So we have
\begin{equation}
	\label{eq:u'_estimate}
		(\tilde{u}+\tilde{\psi}_{t_i})(0,s) \le -s+C
\end{equation}
for all $0 \le s \le \beta t_i$, where $C$ is independent of $t_i$ and $s$.
On the other hand, we have
\begin{equation}
	\label{eq:u_estimate}
		\tilde{u}(0, s) \le s+C 
\end{equation}
for all $s \in \mathbb{R}_{\ge 0}$.
Let $\tilde{W}_{+, \epsilon, s_0}=\{(x_1, x_2)\in N_{\mathbb{R}}; |x_1|<\epsilon, \,\, x_2\ge s_0\}$ for positive constants $\epsilon>0$ and $s_0>0$.
Then we can find that the closure of $\mu_{\tilde{u}}(\tilde{W}_{+, \epsilon, s_0})$ in $M_{\mathbb{R}}$ is contained in the image of $W_p$ under $\mu_{g_{0}}$ for sufficiently small $\epsilon$ and sufficient large $s_0$, since $\lim_{s\to\infty}\mu_{\tilde{u}}(0,s)=(-\frac{1}{2}, 1)$.
Then it is sufficient to estimate the integral of $\exp(-(\tilde{u}-\alpha \tilde{\psi}_{t_i}))$ over the region $W_{+, \epsilon, s_0}$, because
\begin{equation}\label{eq:equivalent_volumeform}
	\int_{W_p}\exp(-\alpha\psi_{t_i})\omega_{0}^n
	\ge
	C \int_{\tilde{W}_{+, \epsilon, s_0}} 
	\exp(-\tilde{u}-\alpha \tilde{\psi}_{t_i}) dx_1dx_2.
\end{equation}
The inequality (\ref{eq:equivalent_volumeform}) follows from that there are constants $\tilde{c}, \tilde{C}>0$ such that
\[
	\tilde{c} \le
	e^{\tilde{u}}\det \biggl(
	\frac{\partial^2 \tilde{u}}{\partial x_i \partial x_j}
	\biggr)
	\le 
	\tilde{C}.
\]
(See Lemma 4.3 in \cite{song} and \cite{batyrev-selivanova}.)
Since there is a constant $C_{\epsilon}$ depending only on $\epsilon$ such that
\begin{equation}\label{eq:bdd_on_W}
	|(\tilde{u}+\tilde{\psi}_{t_i})(0, x_2) -(\tilde{u}+\tilde{\psi}_{t_i})(x_1, x_2)| \le C_{\epsilon}
\end{equation}
where $(x_1, x_2)\in \tilde{W}_{+,\epsilon, s_0}$,
from (\ref{eq:u'_estimate}), (\ref{eq:u_estimate}) and (\ref{eq:bdd_on_W})
we find that for any $\alpha>1/2$ and sufficient large $t_i$
\begin{eqnarray}
	&&
	\nonumber
		\int_{\tilde{W}_{+,\epsilon, s_0}} 
		\exp(-\tilde{u}-\alpha \tilde{\psi}_{t_i}) dx_1dx_2
	\\
	\nonumber
	&\ge&
		C_\epsilon \int_{s_0}^{\beta t_i} 
		\exp\{-(1-\alpha)\tilde{u}(0,s)-\alpha (\tilde{u}+\tilde{\psi}_{t_i})(0,s)\} ds
	\\
	\nonumber
	&\ge&
		C_\epsilon \int_{s_0}^{\beta t_i}
		\exp((2\alpha-1)s)ds
	\\
	\label{eq:integral_W}
	&=&	
		\frac{C_\epsilon}{(2\alpha-1)}(e^{(2\alpha-1)\beta t_i}-e^{(2\alpha-1)s_0})
		\to \infty	
\end{eqnarray}
as $t_i \to \infty$.
The inequality (\ref{eq:integral_W}) implies that the $(-1)$-curves intersecting with $(0)$-curves are contained in the MIS, because the desired MIS is $\mathbb{Z}_2$-symmetric.
On the other hand, let us see that the $(0)$-curve represented by the segment $\{(y_1, -1) \in M_{\mathbb{R}} \mid -1<y_1<1\}$ is not contained in the MIS.
For this purpose, it is sufficient to show that for \textit{some} open neighborhood $W$ of some point contained  in the above $(0)$-curve such that the integral $\int_W \exp (-\alpha\psi_{t_i})\omega_0^{n}\le \mathcal{O}(1)$  as $t_i\to \infty$ for \textit{some} $\alpha\in (n/(n+1), 1)$.
Let $W_{-, \epsilon, 0}=\{(x_1, x_2)\in N_{\mathbb{R}}; |x_1|<\epsilon, \,\, x_2\le 0\}$ for sufficiently small $\epsilon>0$.
Since $\lim_{s\to -\infty}\mu_{\tilde{u}}(0,s)=(0,-1)\in M_{\mathbb{R}}$, there is an open subset $W\subset X$ such that $(0,-1) \in \overline{\mu_{g_0}(W)}\subset \mu_{\tilde{u}}(W_{-,\epsilon, 0})$, where $\overline{\mu_{g_0}(W)}$ is the closure of $\mu_{g_0}(W)$.
On the half-line $\{(0,s) \in N_{\mathbb{R}}\mid s\le 0\}$, we have  $(\tilde{u}+\tilde{\psi}_{t_i})(0,s) \ge 0$ for all $s\le 0$. This implies that for any $\alpha\in (0,1)$
\begin{eqnarray*}
		\int_W \exp(-\alpha\psi_{t_i})\omega_0^n
	&\le&
		C \int_{W_{-,\epsilon, 0}}
		\exp(-\tilde{u}-\alpha \tilde{\psi}_{t_i}) dx_1dx_2
	\\
	&\le&
		C_\epsilon  \int_{-\infty}^{0} 
		\exp\{-(1-\alpha)\tilde{u}(0,s)-\alpha (\tilde{u}+\tilde{\psi}_{t_i})(0,s)\} ds
	\\
	&\le&
		C_\epsilon  \int_{-\infty}^{0} 
		\exp(-(1-\alpha)\tilde{u}(0,s)) ds
	\\
	&\le&
		C_\epsilon \int_{-\infty}^{0}
		\exp((1-\alpha)s)ds
	\\
	&\le&
		C_\epsilon.
\end{eqnarray*}
In the above inequalities, we used $\tilde{u}(0,s)\ge -s$ for all $s\le 0$.
This implies that the two $(0)$-curves are not contained in the MIS.
Since the Nadel's vanishing theorem (\ref{eq:vanishing_thm}) induces that the multiplier ideal subschemes are connected (see \cite{nadel1}), so we find that the $G^{\mathbb{C}}$-invariant MIS induced by $\{(\sigma_{t_i}^{-1})^*\omega_0\}$ equals exactly to the tree of the three $(-1)$-curves in $\mathbb{CP}^2\sharp 2\overline{\mathbb{CP}^2}$.
By the similar calculation, we also find that the $G^{\mathbb{C}}$-invariant MIS coming from the K\"ahler-Ricci flow on $\mathbb{CP}^2\sharp \overline{\mathbb{CP}^2}$ equals to the exceptional divisor.


\end{document}